\numberwithin{equation}{section}
\definecolor{ivogreen}{HTML}{009538}
\newtheorem{theorem}{Theorem}
\newtheorem{remark}{Remark}[section]
\newtheorem{definition}{Definition}[section]
\newcommand{\cA}{{\mathcal A}}
\newcommand{\cP}{{\mathcal P}}
\newcommand{\rref}[1]{(\ref{#1})}
\newcommand{\R}{\mathbb{R}}
\newcommand{\e}{\textup{e}}
\renewcommand{\i}{\textup{i}}
\newcommand{\bfg}{\mathbf g}
\newcommand{\bfn}{\mathbf n}
\newcommand{\bft}{\mathbf t}
\newcommand{\bfuno}{\mathbf 1}
\newcommand{\bfnn}{{\boldsymbol n}}\newcommand{\bfff}{{\boldsymbol f}}\newcommand{\bfii}{{\boldsymbol i}}\newcommand{\bfjj}{{\boldsymbol j}}\newcommand{\bfkk}{{\boldsymbol k}}
\newcommand{\bftheta}{{\boldsymbol\theta}}
\title{Fine spectral analysis of preconditioned matrices and matrix-sequences arising from stage-parallel implicit Runge-Kutta methods of arbitrarily high order}
\author{Ivo Dravins\thanks{Department of Information Technology,  Uppsala University, Sweden ({ivo.dravins@it.uu.se})} 
\and Stefano Serra--Capizzano\thanks{\mbox{Department of Humanities and Innovation, University of Insubria, Italy ({s.serracapizzano@uninsubria.it})}}
\and Maya Neytcheva\thanks{Department of Information Technology,  Uppsala University, Sweden ({maya.neytcheva@it.uu.se})}}
\begin{document}

\lstset{%
  numbers=none,
  tabsize=3,
  breaklines=true,
  basicstyle=\small\ttfamily,
  framerule=0pt,
  backgroundcolor=\color{gray!25},
  columns=fullflexible
}

\maketitle

\begin{abstract}
The use of high order fully implicit Runge-Kutta methods is of significant importance in the context of the numerical solution of transient partial differential equations, in particular when solving large scale problems due to fine space resolution with many millions of spatial degrees of freedom and long time intervals. In this study we consider strongly A-stable implicit Runge-Kutta methods of arbitrary order of accuracy, based on Radau quadratures, for which efficient preconditioners have been introduced. A refined spectral analysis of the corresponding matrices and matrix-sequences is presented, both in terms of localization and asymptotic global distribution of the eigenvalues. Specific expressions of the eigenvectors are also obtained. The given study fully agrees with the numerically observed spectral behavior and substantially improves the theoretical studies done in this direction so far. Concluding remarks and open problems end the current work, with specific attention to the potential generalizations of the hereby suggested general approach.
\end{abstract}

% REQUIRED
\begin{keywords}
Implicit Runge-Kutta methods, Radau quadrature, preconditioning 
\end{keywords}

% REQUIRED
\begin{MSCcodes}
65F10, 65F15, 65L06, 65F35
\end{MSCcodes}

\section{Introduction}\label{sec0}
Runge--Kutta methods constitute a widely used class of time integration methods for solving (systems of) ordinary differential equations (ODEs). Consider a system of ODEs of the form
\[
\frac{\partial \bm{u}}{\partial t} = f(\bm{u},t)
\]
with some initial conditions.
Here $\bm{u}\in \R^{n}$ is the unknown vector function.

In the general framework of the Runge-Kutta methods the solution at the next time-step is then approximated by
\begin{equation}\label{eq_IRK1}
    \bm{u}_{n+1} = \bm{u}_{n} + \tau \sum_{i=1}^q b_i \bm{k}_i,
\end{equation}
where $\tau$ is the time step and $\bm{k}_i, \ i=1,\hdots,q$, denote intermediate variables referred to as \textit{stages}. The stage variables are implicitly defined by the relations
\begin{equation}\label{eq_IRK2}
    \bm{k}_i = f(\bm{u}_n + \tau \sum_{j=1}^q a_{ij} \bm{k}_j ,t_n + \tau c_i ).
\end{equation}
The method is characterized by the Runge-Kutta matrix $A_q$ and two vectors $\bm{b}, \bm{c}$ often written together in a table, referred to as the  \textit{Butcher tableau} as follows,
\begin{table}[H]
\centering
\begin{tabular}{l|llll}
$c_1$    & $a_{11}$ &  $a_{12}$    &  $\hdots$        &     $a_{1q}$     \\
$c_2$    & $a_{21}$ & $a_{22}$ &   $\hdots$       &       $a_{2q}$    \\
$\vdots$ & $\vdots$ & $\vdots$ & $\ddots$ &     $\vdots$      \\
$c_q$    & $a_{q1}$ & $a_{q2}$ & $\hdots$ & $a_{qq}$ \\ \hline
         & $b_1$    & $b_2$    & $\hdots$ & $b_q$
\end{tabular}
= \begin{tabular}{l|l}
${\bf c}$&$A_q$\\\hline &${\bf b}$
\end{tabular}.
\end{table}

The Runge-Kutta (RK) methods can be broadly classified by the structure of the matrix $A_q$, namely,  we have explicit RK methods when $A_q$ is strictly lower-triangular, diagonally implicit Runge-Kutta (DIRK) methods when $A_q$ is lower-triangular with nonzero diagonal and fully implicit RK methods (IRK) when $A_q$ is generally dense.

The IRK methods are characterized by their high order time-discretization and possessing strong stability properties. Despite these desirable features IRK methods have seen a relatively limited use, mainly due to the high computational cost when being implemented. Indeed, we see from \rref{eq_IRK1} that each time-step necessitates the solution of the linear system \rref{eq_IRK2} of dimension $qn$, where $n$ denotes the number of equations in the considered system of ODEs and can be very large and $q$ is the number of stages in the IRK method.

In this work we consider only linear problems, thus, ODE systems of the form
\begin{equation}\label{eq1.1}
 	 M \frac{\partial \bm{u}(t)}{\partial t} + K \bm{u}(t) = \bm{f}(t),
 \end{equation}
 where $M$ and $K$ are square matrices in $\R^{n\times n}$ and $\bm{u}(t)$, $\bm{f}(t)$ are vectors in $\R^{n}$.
We assume that these arise after semi-discretization in space of some non-stationary linear partial differential equation of the type
\begin{equation}\label{eq_pde}
\displaystyle{\frac{\partial u}{\partial t}} - \varepsilon\Delta u +\bm{b}\cdot\nabla u = f(t),
\end{equation}
equipped with appropriate initial and boundary conditions.
Further, we use the IRK method based on the Radau (or Gauss-Radau) integration method, known also as Radau IIA. It has approximation order $2q-1$, where $q$ is the number of stages. In addition, it is a \textit{strongly} $A$-stable (also called \textit{strongly} $L$-stable) method, for a definition, see for instance \cite{Ax64}.
An additional advantage is that the Radau method is shown to be \textit{stiffly accurate}, cf. e.g., \cite{HairerWanner_1991}), thus, it does not exhibit  order reduction, that can occur for instance when solving systems of differential-algebraic equations, see e.g., \cite{Petzold,AxBla}.

The fully discrete analogue of \rref{eq1.1} in Kronecker product (tensor) form reads
\begin{equation}\label{base_kronfrom}
     \cA_0\bm{k} = (\mathbb{I}_q\otimes M + \tau A_q\otimes K) \bm{v} = \overline{\bm{g}} - (\mathbb{I}_q\otimes K) (\bm{e}_q\otimes \bm{u}_0),
\end{equation}
or, as $A_q^{-1}$ is nonsingular, in transformed form
\begin{equation}\label{trans_kronfrom}
     \cA\bm{k}=( A_q^{-1}  \otimes M + \tau \mathbb{I}_q \otimes K) \bm{k} = (A_q^{-1} \otimes \mathbb{I}_n) \overline{\bm{g}} - (A_q^{-1} \otimes K)(\bm{e}_q \otimes \bm{u}_0 ).
\end{equation}
Here $\tau$ is the time step.

In this study we consider one particular preconditioner, $\cP$, for the matrix $\cA$ in the linear system \rref{trans_kronfrom}, as arising from the Radau IIA-type IRK discretizations of \rref{eq1.1}. It enables only real arithmetic and allows for stage-parallel implementation. Specifically, we are concerned with the task to analyse and estimate the spectrum of the preconditioned matrix $\cP^{-1}\cA$ in order to explain the high numerical efficiency and the fast convergence behaviour observed in the numerical results in \cite{IRKtheory} and in \cite{stageparallel}.

This work significantly improves the previous eigenvalue bound for the two stage case from \cite{IRKtheory}. We thoroughly analyse the case $q=2$, further show the detailed spectral behavior for the three stage case and present a general machinery in which we, for a general $q$-stage method, construct polynomials of degree $q-1$, whose roots determine the eigenvalues of the preconditioned system.
%The obtained theoretical results are illustrated by numerical experiments.
In fact, both in terms of localization and global distribution of the eigenvalues of the preconditioned system, our results are tight and cannot be improved theoretically, where the latter statement is justified in a rigorous way and also observed in a wide set of numerical tests.

The paper is structured as follows.  A description of the arising matrices and the proposed preconditioner is given in Section \ref{sec1}. In Section \ref{sec:sp_tools} we describe the theoretical tools used in the analysis of the spectral properties of the preconditioned matrix.
The spectral analysis itself is presented in Sections \ref{sec:sp_loca} and
Section \ref{sec:sp_distr}.
Section \ref{sec:numerics} contains the numerical test suite.
Conclusions and possibilities for future extensions of the results are found in Section \ref{sec:final}.

\section{Linear systems arising in IRK and preconditioning }\label{sec1}

We consider the semi-discrete system of ODEs from \rref{eq1.1}, arising after the spatial discretization of a  partial differential equation as in \rref{eq_pde}. We assume that the space discretization is done by some suitable finite element method (FEM), thus,  $M$ is a mass matrix, $K$ is a stiffness matrix. The matrix $M$ is symmetric and positive definite by construction. In this study we assume that $K$ possesses the same property, that is ensured when $\bm{b}$ is zero. However, the technique that we introduce can handle also the general case and this is discussed in the conclusions.

The linear system that has to be solved in order to determine the stage variables
is either of the form \rref{base_kronfrom} or as in the transformed form \rref{trans_kronfrom}.

Next we make a choice between $\cA_0$ and $\cA$ in favour of the latter. As discussed in \cite{IRKtheory}, constructing a stage-parallel preconditioner  to $\cA_0$ would require solutions with the diagonal blocks of $I_q\otimes M+\tau(A_q\otimes K)$ while the preconditioner to $\cA$ requires solution of systems with the diagonal blocks of $A_q^{-1}\otimes M+\tau(I_q\otimes K)$. (Here and in the sequel, $I_m$ denotes the identity matrix of order $m$.)
When $K$ is ill-conditioned then the block matrices in the former case are also ill-conditioned. To reduce the ill-conditioning we choose the latter form, which also facilitates finding a good preconditioner of these matrices too, for instance of algebraic multilevel (AMLI) type or of algebraic multigrid (AMG) type method \cite{Notay,Vassilevski}.

The detailed form of system to be solved to determine the stage variables, as well as that of the matrix $\cA$ is given by
\begin{equation}
\cA\begin{bmatrix}\bm{k}_1 \\ \bm{k}_2 \\ \vdots \\ \bm{k}_q\end{bmatrix} \equiv
\begin{bmatrix}
a_{11}M +  \tau  K &  a_{12}M    &  \hdots        &    \tau a_{1q}M     \\
  a_{21}M & M + a_{22}M+\tau K &   \hdots       &     \tau  a_{2q}M    \\
 \vdots & \vdots & \ddots &     \vdots      \\
 a_{q1}M &a_{q2}M & \hdots  & a_{qq}M + \tau K
\end{bmatrix}
\begin{bmatrix}\bm{k}_1 \\ \bm{k}_2 \\ \vdots \\ \bm{k}_q\end{bmatrix}
= \begin{bmatrix}
\bm{f}_1  \\
\bm{f}_2  \\
\vdots \\
\bm{f}_q
\end{bmatrix}
\end{equation}
As argued above, we work with $\cA$ as arising in
the class of the Radau IIA IRK methods to take full advantage of their attractive accuracy and stability properties, cf. e.g.,  \cite{Ax64,HairerWanner_1991,Petzold,AxBla,Lambert,Ehle_1973}.

In addition to the above qualities, the IRK matrices, arising from Radau IIA quadratures, possess some special algebraic properties, used when constructing a preconditioner to $\cA$, which makes the preconditioner superior to other preconditioners of similar algebraic structure, see \cite{Rana} and the discussion in \cite{IRKtheory}.

Our analysis utilizes the rational form of the RK matrices $A_q$ and we show in Figure \ref{fig_Aq_rat} two such matrices as an illustration.

\begin{figure}[H]
\centering
\[
\renewcommand\arraystretch{1.5}
{ \everymath={\displaystyle}
\begin{array}{c|cc}
\frac{1}{3}    & \frac{5}{12} &  -\frac{1}{12} \\[8pt]
1    & \frac{3}{4} & \frac{1}{4} \\[8pt] \hline
 \rule{0pt}{20pt}  & \frac{3}{4}    & \frac{1}{4}
\end{array}} \qquad
{ \everymath={\displaystyle}
\begin{array}
{c|ccc}
\frac{2}{5}- \frac{\sqrt{6}}{10}    & \frac{11}{45}-\frac{7\sqrt{6}}{360} &  \frac{37}{225}- \frac{169\sqrt{6}}{1800}  & -\frac{2}{225}+\frac{\sqrt{6}}{75}     \\ [8pt]
\frac{2}{5}+ \frac{\sqrt{6}}{10}    &  \frac{37}{225} + \frac{169\sqrt{6}}{1800} &  \frac{11}{45}+\frac{7\sqrt{6}}{360}       &       -\frac{2}{225}-\frac{\sqrt{6}}{75}    \\[8pt]
1 & \frac{4}{9}-\frac{\sqrt{6}}{36} & \frac{4}{9}+\frac{\sqrt{6}}{36} &  \frac{1}{9} \\[8pt] \hline
  \rule{0pt}{20pt}  & \frac{4}{9}-\frac{\sqrt{6}}{36} & \frac{4}{9}+\frac{\sqrt{6}}{36} &  \frac{1}{9}
\end{array}}
\]
\caption{Examples of Butcher tableau, $q=2$ (left), the $q=3$ (right)}\label{fig_Aq_rat}
\end{figure}

\subsection*{Preconditioning}
As the system of equations \eqref{trans_kronfrom} is of dimension of $qn$, where $q$ is the number of stages and $n$ is the number of spacial degrees of freedom, it can be very large, necessitating the use of iterative solution methods such as GMRES or GCR (see \cite{Ax-book,Saad-book} and references therein) combined with some efficient preconditioning technique.
We aim at constructing a preconditioner, which is both numerically efficient, i.e., resulting in tight clustering of the eigenvalues of $\cP^{-1}\cA$, as well as computationally efficient, which in this case includes stage parallelism. We also pose the requirement, when applying the preconditioner, to use only real arithmetic.

The preconditioner is based on some derivations in \cite{Ax64}, showing that the entries in the lower-triangular part of the matrix $A_q$ are by value larger than those in the strictly upper triangular part, thus, the lower-triangular part is expected to be a good approximation of $A_q$. The property is inherited by $A_q^{-1}$.

A preconditioner based on the lower-triangular factor of a particular $LU-$decom- position of $A_q^{-1}$ is first proposed in \cite{AxNey_Algoritmy}, namely,
let factorize $A_q^{-1}=L_qU_q$, where $U_q$ has unit diagonal. Because of the dominating property of $L_q$, $\|U_q-I_q\|$ is small, in particular less than $1$.
The lower-triangular factor $L_q$ is real-valued diagonalizable and its spectral decomposition  $L_q = T_q \Lambda_q T_q^{-1}$ is easily computed. The matrix $\Lambda_q$ contains the diagonal entries of $L_q$ and the matrices $T_q$ are also of lower-triangular form and can be computed by a simple recursion.
The preconditioner is of the form
\begin{equation}\label{eq_precPL}
\cP=L_q\otimes M + \tau I_q\otimes K.
\end{equation}
Note, that the spectral decomposition of $L_q$ enables parallelization across the stages while avoiding complex arithmetic. Indeed, we see that
\begin{equation}\label{eq_transdiag}
\cP = L_q\otimes M + \tau I_q\otimes K =(T_q\otimes I_n)\underbrace{\left( \Lambda_q\otimes M + \tau I_q\otimes K\right)}_{\cP_d}(T_q^{-1}\otimes I_n),
\end{equation}
where $\cP_d$ is block diagonal with $q$ blocks of size $n$. The form \rref{eq_transdiag} allows for the action of $\cP_d^{-1}$ to be computed in a stage-parallel fashion, and as shown in \cite{stageparallel}, the cost of the $T$-transformations is small. As a side note, we mention that the parallel behavior of the preconditioner and comparisons between the stage parallel and stage serial versions is studied in \cite{stageparallel}, however, it falls out of the scope of the current study and is not considered any further.

Clearly, all computations when applying $\cP$ require real arithmetic. This is in contrast to the idea to use the spectral decomposition of $A_q$, cf. e.g., \cite{Butcher_1976}, which entails complex arithmetic because some of the eigenvalues of $A_q$ appear in complex-conjugate pairs.

The spectral properties of $\cP^{-1}\cA$ are studied in \cite{IRKtheory} and a conjecture regarding the distribution of the eigenvalues of the preconditioned system is made, combined with a rigorous derivation of a spectral bound only for the two-stage case.
The current work focuses on analysis of the spectrum of the preconditioned system. For this, the above spectral decomposition is not needed but is nonetheless mentioned as it is an important implementation-related detail.

\section{Theoretical and spectral tools for matrix analysis}\label{sec:sp_tools}
In this section we present the main analysis tools that play a crucial role in part of the derivations in Section \ref{sec:sp_loca} and in the whole study in Section \ref{sec:sp_distr}. In particular, we recall the concept of (multilevel) Toeplitz matrices and that of the related matrix-sequences, of preconditioned Toeplitz structures and of the associated preconditioned matrix-sequences, and that of spectral distribution in the Weyl sense (see, for example, \cite{GLT-block1D,GLT-blockdD,GLT1_book,GLT2_book} for a complete account of the relevant theory).

\subsection{Multilevel block Toeplitz matrices, preconditioned structures, and spectral distribution}\label{ssec:toeplitz-distribution}
Toeplitz matrices are a particular class of matrices, characterized by the fact that all their diagonals parallel to the main one have constant values. Namely, we write $ \left( T_n \right)_{i,j} = t_{i-j} $, $ i,j = 1,...,n $, to denote a Toeplitz matrix of size $ n $, where $ t_k $ is a constant for every $ k = 1-n,...,n-1 $.\\
When each $ t_k $ is a square matrix of fixed dimension $r$ we say that $ T_n $ is a $r$-block Toeplitz matrix.\\
Then, in a recursive manner, it is possible to define a $d$-level Toeplitz matrix as follows: a $d$-level Toeplitz matrix is a Toeplitz matrix where each ``coefficient" $t_k$ denotes a $ (d-1) $-level Toeplitz matrix. Namely, using a standard multi-index notation, we can write a $ d$-level Toeplitz matrix as
\begin{equation*}
T_{\bfnn} = \left( t_{\bfii-\bfjj} \right)_{\bfii,\bfjj=\bfuno}^{\bfnn} \in \mathbb{C}^{N(\bm{n}) \times N(\bm{n})},
\ \ \ N(\bm{n})=n_1\cdots n_d,
\end{equation*}
where $ \bfnn = \left( n_1,...,n_d \right) $ is a positive integer multi-index (i.e. $ 0 < n_i \in \mathbb{N} $ for every $ i $) and $ t_{\bfkk} \in \mathbb{C} $ for every $ \bfkk = -\left(\bfnn - \bfuno\right),..., \bfnn - \bfuno $, $ \bfuno $ denoting the vector in $ \mathbb{Z}^d  $ of all ones. When the basic elements $ t_\bfkk \in \mathbb{C}^{r \times r} $ for some $ 1 \leq r\in \mathbb{N} $, we say that $ T_\bfnn $ is a $ d$-level $ r $-block Toeplitz matrix.\\
We are particularly interested in the case where the matrix $ T_\bfnn $ is  generated by a function $ \bfff \in L^1([-\pi,\pi]^d) $. Namely, given a function $ \bfff : [-\pi,\pi]^d \to \mathbb{C}^{r \times r} $ in $ L^1([-\pi,\pi]^d) $ we denote its Fourier coefficients as
\begin{equation*}
\hat{\bfff}_\bfkk=\frac1{(2\pi)^d}\int_{[-\pi,\pi]^d}\bfff(\bftheta)\e^{-\i\,\bfkk\cdot\bftheta}d\bftheta\in\mathbb C^{r \times r},\quad\bfkk\in\mathbb Z^d, \quad \bfkk\cdot\bftheta=\sum_{i=1}^d k_i \theta_i
\end{equation*}
and define the associated sequence of $ d $-level $ r $-block Toeplitz matrices by
\begin{equation*}
 \left\{ T_{\bfnn,r}(\bfff) \right\}_\bfnn, \quad \  T_{\bfnn,r}(\bfff):=\left(\hat{\bfff}_{\bfii-\bfjj}\right)_{\bfii,\bfjj=\bfuno}^{\bfnn}\in \mathbb{C}^{rN(\bm{n}) \times r N(\bm{n})},\qquad\bfnn\in\mathbb N^d.
\end{equation*}

The multi-index $\bfjj$ has to be understood as $(j_1,\ldots,j_d)$ and the ordering is lexicographical as in, %the work by E. Tyrtyshnikov (see
for instance, \cite{ty-1} or in the books \cite{GLT-blockdD,GLT2_book}, with $\bfjj < \bfkk$ if $j_l\le k_l$, for all $l=1,\ldots,d$, and $\bfjj\neq \bfkk$.

\noindent
For a square matrix $ X_{\bfnn,r} $ of dimension $ d_\bfnn $, where $ r $ is a constant independent of $ \bfnn $, define
$$
\Sigma_\sigma(F,X_{\bfnn,r}):=\frac{1}{d_\bfnn}\sum_{k=1}^{d_\bfnn}F(\sigma_k(X_{\bfnn,r})), \quad \Sigma_\lambda(F,X_{\bfnn,r}):=\frac{1}{d_\bfnn}\sum_{k=1}^{d_\bfnn}F(\lambda_k(X_{\bfnn,r})),
$$
where $\sigma_k(X_{\bfnn,r})$ and $\lambda_k(X_{\bfnn,r})$ denote the singular values and the eigenvalues of $X_{\bfnn,r}$, respectively, sorted in non-decreasing order.\\
Hereafter, the symbol $ \left\{ X_{\bfnn,r} \right\}_\bfnn $ is used to denote a sequence of matrices of increasing dimension $ d_\bfnn $ such that $ d_\bfnn \to \infty $ as $ \bfnn \to \infty $,  the notation $ \bfnn \to \infty $ means that $ n_i \to \infty $ for every $ i=1,...,d $.

\begin{definition}[Spectral symbol]\label{def:spectral-distr}
Let $ X_{\bfnn,r} $ be a matrix-sequence and let $ \bfff : D \to \mathbb{C}^{r \times r} $ be a Hermitian matrix-valued measurable function defined on a measurable set $ D \subset \mathbb{R}^m $ such that $ 0 < \mu_m(D) < \infty $, where $ \mu_m $ denotes the Lebesgue measure on $ \mathbb{R}^m $. We say that $ \left\{ X_{\bfnn,r} \right\}_\bfnn $ is distributed like $ \bfff $ in the sense of eigenvalues, if for every $ F \in C_c(\mathbb{R}) $, we have
\begin{equation*}
\lim_{\bfnn \to \infty}  \Sigma_\lambda(F,X_{\bfnn,r}) = \frac{1}{\mu_m(D)} \int_{D} \frac{1}{r}\sum_{k=1}^{r} F(\lambda_k(\bfff(\bftheta)) d\mu_m(\bftheta),
\end{equation*}
where $ \lambda_1(\bfff(\bftheta)), ..., \lambda_r(\bfff(\bftheta)) $ denote the eigenvalue of $ \bfff(\bftheta) $.
We say that $ \bfff $ is the spectral symbol of the sequence $ \left\{ X_{\bfnn,r} \right\}_\bfnn $ and denote it as $ \left\{ X_{\bfnn,r} \right\}_\bfnn \sim_{\lambda} \bfff $.\\
Note that, in the special case where $ r=1 $, the previous formula reads as
\begin{equation*}
\lim_{\bfnn \to \infty}  \Sigma_\lambda(F,X_{\bfnn,r}) = \frac{1}{\mu_m(D)} \int_{D} F(f(\bftheta) d\mu_m(\bftheta),
\end{equation*}
\end{definition}

When we consider a sequence of Toeplitz matrices generated by a Hermitian-valued function $ \bfff $ in $ L^1([-\pi,\pi]^d) $, it holds that $ \left\{ T_{\bfnn,r}(\bfff) \right\}_\bfnn \sim_{\lambda} \bfff $, that is, the generating function and the spectral symbol coincide (see \cite{Tillinota}).
The same is true regarding the preconditioned sequences and remarkably there are no outliers, thanks to the linear and positive nature of the underlying Toeplitz operators.

\begin{theorem}\label{th:toeplitz-summa}
$ \bfff \in L^1([-\pi,\pi]^d) $ be Hermitian-valued and let $ \bfg \in L^1([-\pi,\pi]^d) $ be Hermitian nonnegative definite valued with minimal eigenvalue not identically zero. Then $T_{\bfnn,r}(\bfff)$ is Hermitian and $T_{\bfnn,r}(\bfg)$ is positive definite for every dimension.
Furthermore
\begin{itemize}
\item $ \left\{ T_{\bfnn,r}(\bfff) \right\}_\bfnn \sim_{\lambda} \bfff $;
\item $ \left\{ T_{\bfnn,r}^{-1}(\bfg)T_{\bfnn,r}(\bfff) \right\}_\bfnn \sim_{\lambda} \bfg^{-1}\bfff $;
\item all the eigenvalues of $T_{\bfnn,r}^{-1}(\bfg)T_{\bfnn,r}(\bfff)$ belong to the open interval $(m,M)$ if $m= {\rm essinf}\, \lambda_{\min}(\bfg^{-1}\bfff )$,
$M={\rm esssup}\, \lambda_{\max}(\bfg^{-1}\bfff )$, and the minimal and maximal eigenvalue functions of $\bfg^{-1}\bfff$ are nonconstant almost everywhere (a.e.);
\item in the case where the minimal eigenvalue functions of $\bfg^{-1}\bfff$ is constant a.e. the smallest eigenvalue of $T_{\bfnn,r}^{-1}(\bfg)T_{\bfnn,r}(\bfff)$ may be equal to $m$ (analogously, in the case where the maximal eigenvalue functions of $\bfg^{-1}\bfff$ is constant a.e. the largest eigenvalue of $T_{\bfnn,r}^{-1}(\bfg)T_{\bfnn,r}(\bfff)$ may be equal to $M$).
\end{itemize}
\end{theorem}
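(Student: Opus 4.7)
I would prove the four bullets in order, progressing from bookkeeping to the genuinely technical bound. First, Hermiticity of $T_{\bfnn,r}(\bfff)$ follows from the identity $\hat{\bfff}_{-\bfkk}=\hat{\bfff}_{\bfkk}^*$, which is a direct consequence of $\bfff(\bftheta)=\bfff(\bftheta)^*$ a.e. Positive definiteness of $T_{\bfnn,r}(\bfg)$ rests on the standard representation
\begin{equation*}
\bfv^* T_{\bfnn,r}(\bfg)\bfv=\frac{1}{(2\pi)^d}\int_{[-\pi,\pi]^d} P_\bfv(\bftheta)^*\,\bfg(\bftheta)\,P_\bfv(\bftheta)\,d\bftheta,
\end{equation*}
where $P_\bfv$ is the $\mathbb{C}^r$-valued trigonometric polynomial whose Fourier coefficients are the blocks of $\bfv$. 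Since $\bfg$ is Hermitian positive semidefinite a.e.\ with minimal eigenvalue not identically zero, and since a nonzero trigonometric polynomial cannot vanish on a set of positive Lebesgue measure, the quadratic form is strictly positive for every $\bfv\neq\mathbf 0$.

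For the first spectral distribution statement I would invoke the classical Tilli result cited in \cite{Tillinota} for multilevel block Toeplitz matrix-sequences generated by a Hermitian matrix-valued symbol in $L^1([-\pi,\pi]^d)$, giving $\{T_{\bfnn,r}(\bfff)\}_\bfnn\sim_\lambda \bfff$. The second distribution statement then follows by a GLT argument: the preconditioned matrix is similar to the Hermitian matrix $T_{\bfnn,r}^{-1/2}(\bfg)T_{\bfnn,r}(\bfff)T_{\bfnn,r}^{-1/2}(\bfg)$, and since $\{T_{\bfnn,r}(\bfg)\}_\bfnn$ and $\{T_{\bfnn,r}(\bfff)\}_\bfnn$ are GLT matrix-sequences with symbols $\bfg$ and $\bfff$ respectively, the algebra of GLT sequences yields $\{T_{\bfnn,r}^{-1}(\bfg)T_{\bfnn,r}(\bfff)\}_\bfnn\sim_\lambda \bfg^{-1}\bfff$.

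The main technical step is the strict localization in the open interval $(m,M)$. I would proceed by contradiction: suppose $\lambda\ge M$ is an eigenvalue of $T_{\bfnn,r}^{-1}(\bfg)T_{\bfnn,r}(\bfff)$ with eigenvector $\bfv\neq\mathbf 0$. Then
\begin{equation*}
0=\bfv^*\bigl(\lambda T_{\bfnn,r}(\bfg)-T_{\bfnn,r}(\bfff)\bigr)\bfv=\frac{1}{(2\pi)^d}\int_{[-\pi,\pi]^d} P_\bfv(\bftheta)^*\bigl(\lambda\bfg(\bftheta)-\bfff(\bftheta)\bigr)P_\bfv(\bftheta)\,d\bftheta.
\end{equation*}
Writing $\lambda\bfg-\bfff=\bfg^{1/2}\bigl(\lambda I_r-\bfg^{-1/2}\bfff\,\bfg^{-1/2}\bigr)\bfg^{1/2}$ and setting $Q(\bftheta):=\bfg^{1/2}(\bftheta)P_\bfv(\bftheta)$, the integrand is pointwise Hermitian positive semidefinite because $\lambda\ge \lambda_{\max}(\bfg^{-1}\bfff)(\bftheta)$ a.e. Vanishing of the integral forces $Q(\bftheta)$ to lie in the eigenspace corresponding to $\lambda_{\max}(\bfg^{-1}\bfff)(\bftheta)$ a.e.; in particular, on the set $E=\{\bftheta:\lambda_{\max}(\bfg^{-1}\bfff)(\bftheta)<\lambda\}$ one must have $Q\equiv 0$, hence $P_\bfv\equiv 0$ on $E$ since $\bfg^{1/2}$ is invertible a.e. The nonconstancy hypothesis on $\lambda_{\max}$ guarantees that $E$ has positive measure, and a nonzero matrix-valued trigonometric polynomial cannot vanish on a positive-measure set, whence $\bfv=\mathbf 0$, a contradiction. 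The bound $\lambda>m$ follows by the symmetric argument. The decisive ingredient --- and the main obstacle --- is this trigonometric polynomial nonvanishing lemma, which is precisely where the nonconstancy hypothesis enters in an essential way.

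Finally, when $\lambda_{\min}(\bfg^{-1}\bfff)\equiv m$ a.e., the obstruction in the previous paragraph disappears: the set $E$ is now a null set, so $Q$ is free to lie in the (a.e.\ constant) null-eigenspace of $\bfg^{-1/2}\bfff\bfg^{-1/2}-mI_r$ without being forced to vanish, and concrete eigenvectors with eigenvalue exactly $m$ can be produced by choosing $\bfv$ so that the values of $P_\bfv(\bftheta)$ land in this common null-eigenspace. The maximal case is entirely analogous, completing the proof.
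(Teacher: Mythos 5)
The paper does not prove Theorem~\ref{th:toeplitz-summa}; it is recorded as a known result from the multilevel block Toeplitz/GLT literature (Tilli, the GLT volumes, and the preconditioned-Toeplitz localization theory of Serra-Capizzano), so there is no internal argument to compare yours against. Judged on its own terms, your sketch is essentially the standard and correct route: the sesquilinear representation $\bfv^*T_{\bfnn,r}(\bfh)\bfv=(2\pi)^{-d}\int P_\bfv^*\,\bfh\,P_\bfv$ for Hermiticity and positivity, Tilli's theorem for the first distributional bullet, the GLT $\ast$-algebra combined with symmetrization $T^{-1/2}(\bfg)T(\bfff)T^{-1/2}(\bfg)$ for the second, and the contradiction argument (nonnegative integrand with zero integral plus the fact that a nonzero vector-valued trigonometric polynomial cannot vanish on a positive-measure set) for the strict open-interval localization. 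Two small points to tighten. First, whenever you write $\bfg^{-1/2}$ or $\bfg^{-1}\bfff$ pointwise, you are tacitly using $\lambda_{\min}(\bfg)>0$ a.e.; the stated hypothesis (``minimal eigenvalue not identically zero'') only guarantees positivity on a positive-measure set, which is enough for $T_{\bfnn,r}(\bfg)\succ 0$ but not by itself enough for $\bfg^{-1}\bfff$ to be defined a.e. --- you should make explicit that a.e.\ invertibility of $\bfg$ is the operative hypothesis in the last two bullets, as it is in the references. Second, the final bullet is only an existential ``may''; rather than asserting that eigenvectors landing in the common null-eigenspace ``can be produced'', it suffices (and is cleaner) to exhibit one explicit instance, e.g.\ $\bfff=m\,\bfg$, where $T_{\bfnn,r}^{-1}(\bfg)T_{\bfnn,r}(\bfff)=mI$ attains the endpoint. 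With those caveats, your plan is sound and matches the approach the cited sources take.
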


Finally, we say that a sequence $ \left\{ X_{\bfnn,r} \right\}_\bfnn $ is zero distributed, denoted \mbox{$\left\{ X_{\bfnn,r} \right\}_\bfnn \sim_{\sigma} 0 $} if, for every $ F \in C_c(\mathbb{R}) $,
\begin{equation*}
\Sigma_\sigma(F,X_{\bfnn,r})=F(0).
\end{equation*}

\section{Spectral Analysis: localization results}\label{sec:sp_loca}
Consider now the preconditioned matrix $\cP^{-1}\cA$, where $\cA$ is  defined in \rref{trans_kronfrom} and $\cP$ is defined in \rref{eq_precPL}.
By standard algebraic manipulations the preconditioned matrix takes the form
\begin{equation}\label{eq_remterm}
\begin{array}{rcl}
\cP_L^{-1}\cA&=& (L_q\otimes M + \tau I_q\otimes K)^{-1}(A_q^{-1}\otimes M + \tau I_q\otimes K) \\
&=&(L_q\otimes M + \tau I_q\otimes K)^{-1}\left[(L_q\otimes M + \tau I_q\otimes K)+L_q\widehat{U}_q\otimes M\right] \\
&=& I_{qn} + (L_q\otimes M + \tau I_q\otimes K)^{-1}(L_q\widehat{U}_q\otimes M)\\
&=&I_{qn} + \underbrace{(I_{qn} + \tau (L_q^{-1}\otimes M^{-1}K))^{-1}}_{W_1^{-1}}\underbrace{(\widehat{U}_q\otimes I_n)}_{W_2},
\end{array}
\end{equation}
and, hence, the analysis reduces to the study of the spectrum of $W_1^{-1}W_2$. In \cite{IRKtheory} the spectral localization has been studied by using the field of values. Here we consider a more direct approach by setting explicitly the eigenvalue-eigenvector problem, by exploiting the lower-triangular and the strictly upper-triangular structure of the factors $L_q$ and $U_q$, correspondingly, which allow explicit computations.

Indeed, we consider the eigenvalues of $W_1^{-1}W_2$, that is, we set the basic relationships
\[
W_1^{-1}W_2 \bm{v} = \lambda \bm{v} \Leftrightarrow W_2 \bm{v} = \lambda W_1 \bm{v},
\]
where the second equation can be written as
\begin{equation}\label{transfromedeig}
(\widehat{U}_q\otimes I_n) \bm{v}   = \lambda (I_{qn} + \tau (L_q^{-1}\otimes M^{-1}K))  \bm{v}  =: \lambda \left(I_{qn} +  L_q^{-1}\otimes Z_\tau\right)  \bm{v},
\end{equation}
with $Z_\tau=\tau M^{-1}K$. The idea behind this reformulation is that the eigenvalues $\lambda$ of $W_1^{-1}W_2$ can be expressed as the eigenvalues of an explicit function of rational nature in terms of the matrix $Z_\tau$. We notice that the spectral behavior of $Z_\tau$ is well understood using the tools in the previous section (see Theorem \ref{th:toeplitz-summa}) and hence our problem of identifying a precise localization of $\lambda$ is substantially simplified.

We present a complete analysis of the cases $q=2$ in Section \ref{q=2} and $q=3$ in Section \ref{q=3}. The general setting is discussed in Section \ref{q>3}.
We stress that our findings are more precise than those in \cite{IRKtheory}, where the field of values is used as main tool. Indeed, our direct approach allows to obtain substantial generalizations and tighter localization results.

As anticipated in the introduction, we will assume that the matrices $K$ and $M$ will be both symmetric and positive definite: more precisely, following the notations in Section \ref{ssec:toeplitz-distribution}, we consider  $M=T_{\bm{n},1}(g_1)$, $h^2 K=T_{\bm{n},1}(g_2)$, with $g_1$ being a strictly positive trigonometric polynomial in the variable $\theta_1,\theta_2$, $g_2(\theta_1,\theta_2)=4-2\cos(\theta_2)-2\cos(\theta_2)$, $\bm{n}=(n,n)$.
However, our analysis can be generalized and we will discuss this issue in the conclusions.

\subsection{The two stage case}\label{q=2}

For the case $q=2$, we have
$$
A_2^{-1}=\begin{bmatrix}\frac{3}{2}&0\\-\frac{9}{2}&4\end{bmatrix}\begin{bmatrix}1&\frac{1}{3}\\0&1\end{bmatrix}, \quad\text{thus,} \quad
\begin{array}{rcl}
\widehat{U}_2 & = &
\begin{bmatrix}
0&\frac{1}{3}\\0&0
\end{bmatrix}, \quad
L_2^{-1}  =
\begin{bmatrix}
\frac{2}{3} & 0 \\ \frac{3}{4} & \frac{1}{4}
\end{bmatrix}.
\end{array}
$$
Let $\bm{v} = [\bm{v}_1 , \bm{v}_2 ]^T$ with $ \bm{v}_i \in \mathbb{C}^n$, $i=1,2$. As a consequence, taking into consideration \eqref{transfromedeig}, we obtain
\begin{equation}\label{eq_q2_0}
\begin{bmatrix}0&\frac{1}{3} I_n\\0&0\end{bmatrix} \begin{bmatrix}
    \bm{v}_1 \\ \bm{v}_2 \end{bmatrix} = \lambda \Bigg{(} \begin{bmatrix}
    \bm{v}_1 \\ \bm{v}_2 \end{bmatrix} +  \begin{bmatrix}
    \frac{2}{3}Z_\tau & 0 \\ \frac{3}{4}Z_\tau & \frac{1}{4}Z_\tau
\end{bmatrix} \begin{bmatrix}
    \bm{v}_1 \\ \bm{v}_2 \end{bmatrix} \Bigg{)}.
\end{equation}
We first note that \rref{eq_q2_0} is satisfied for $\lambda=0$ and $\bm{v}_2 =0$ for all $\bm{v}_1 \in \mathbb{C}^n$, i.e., the eigenvalue $\lambda = 0$ has geometric multiplicity at least $n$. For localizing the remaining eigenvalues, we assume $\lambda \neq 0$. In that case, by dividing by $\lambda$, the second block row is transformed as
\[
\bm{v}_2 + \frac{3}{4} Z_\tau \bm{v}_1 + \frac{1}{4} Z_\tau \bm{v}_2 = 0 \Leftrightarrow \left(\frac{1}{3} I_n  + \frac{4}{3} Z_\tau^{-1}\right)\bm{v}_2 = - \bm{v}_1.
\]
In this way $\bm{v}_1$ is expressed as a function of $Z_\tau$ and $\bm{v}_2$. Now the first block row becomes
\[
\frac{1}{3} \bm{v}_2 = \lambda \left( I_n + \frac{2}{3} Z_\tau \right)\bm{v}_1
\]
and, as a consequence, by inserting the explicit form of $\bm{v}_1$ from the first block row, we deduce
\[
\frac{1}{3} \bm{v}_2 = -\lambda \left( I_n + \frac{2}{3} Z_\tau\right) \left(\frac{1}{3} I_n  + \frac{4}{3} Z_\tau^{-1}\right)\bm{v}_2,
\]
which can be rewritten as
\[
\bm{v}_2 = -\lambda ( 3I_n + 2Z_\tau ) \left(\frac{1}{3} I_n  + \frac{4}{3} Z_\tau^{-1}\right)\bm{v}_2 = - \lambda \left(4 Z_\tau^{-1} + \frac{2}{3} Z_\tau + \frac{11}{3} I_n \right)\bm{v}_2.
\]
As a final step we find
\[
 -\left(4 Z_\tau^{-1} + \frac{2}{3} Z_\tau + \frac{11}{3} I_n\right)^{-1} \bm{v}_2  = \lambda \bm{v}_2.
\]
The expression above is crucial since the same nonzero eigenvalues $\lambda$ are exactly those of the rational matrix function
\[
f(Z_\tau)=-\left(4 Z_\tau^{-1} + \frac{2}{3} Z_\tau + \frac{11}{3} I_n\right)^{-1}.
\]
Therefore, if $\mu_\tau$ is the generic eigenvalue of $Z_\tau$ then the generic nonzero eigenvalue $\lambda$ of our original problem is
\begin{equation}\label{f-q=2}
f(\mu_\tau)=-\left(4 \mu_\tau^{-1} + \frac{2}{3} \mu_\tau + \frac{11}{3} \right)^{-1}.
\end{equation}
It is now insightful to notice that, independently of the mesh parameters $h$, $\tau$, $n$, since $K$ and $M$ are both positive definite, we infer that $\mu_\tau \in (0,\infty)$ and, hence, $f(\mu_\tau)<0$. We first notice that
\[
\lim_{\mu_\tau\rightarrow 0^+} f(\mu_\tau) = \lim_{\mu_\tau\rightarrow +\infty} f(\mu_\tau)=0,
\]
while, setting
\[
g(\mu_\tau)=-\frac{1}{f(\mu_\tau)}=4 \mu_\tau^{-1} + \frac{2}{3} \mu_\tau + \frac{11}{3},
\]
we find that $g'$ has a unique zero at $\mu_\tau^*=\sqrt{6}$ so that
\[
\min_{\mu_\tau> 0} f(\mu_\tau)= f(\sqrt{6})=-\frac{3\sqrt{6}}{11\sqrt{6}+24}=-r^*_2.
\]
In this way, since $Z_\tau$ is diagonalizable, it is proven that all the eigenvalues of the preconditioned matrix $\cP_L^{-1}\cA$ are either $1$ with algebraic and geometric multiplicity exactly equal to $n$ or they belong to the quite small real positive interval $[1-r^*_2,1)$, where this claim perfectly agrees with the numerical results and improves substantially the previous analysis in \cite{IRKtheory}.
The value of $r^*_2$ is approximately $0.144$, thus, the eigenvalues are located in the interval $[0.8558,1]$.
In addition it should be observed that this localization interval cannot be improved if we do not give constraints on the approximation parameters $h$, $\tau$, $n$ and this is also confirmed in Figure \ref{fig_Ivo1-1}.

\begin{figure}[htbp]
\centering
%\subfigure[]{
\includegraphics[width=0.32\textwidth]{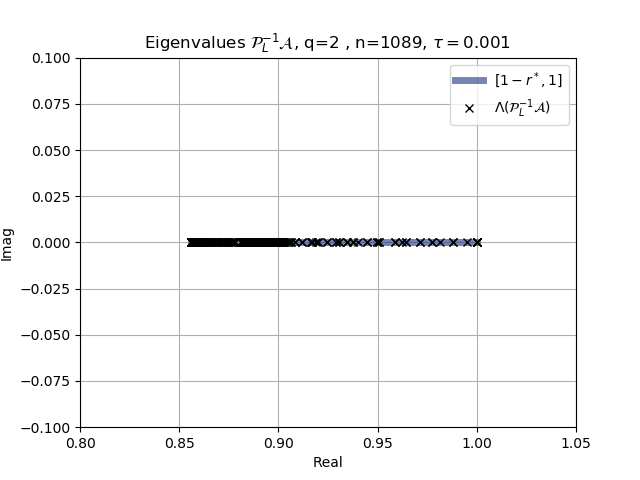} %}
%\subfigure[]{
\includegraphics[width=0.32\textwidth]{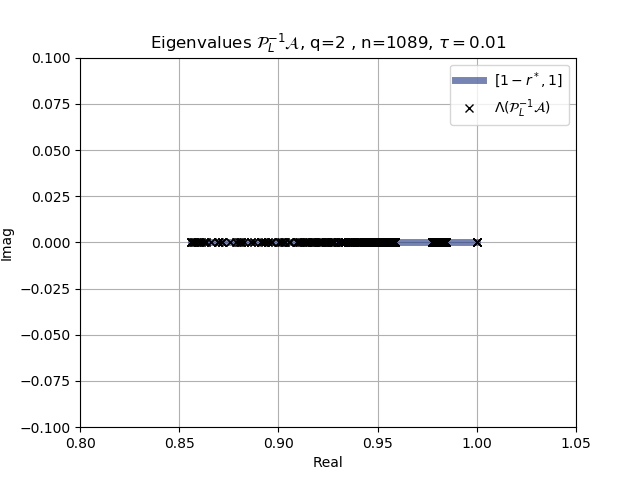} %}
%\subfigure[]{
\includegraphics[width=0.32\textwidth]{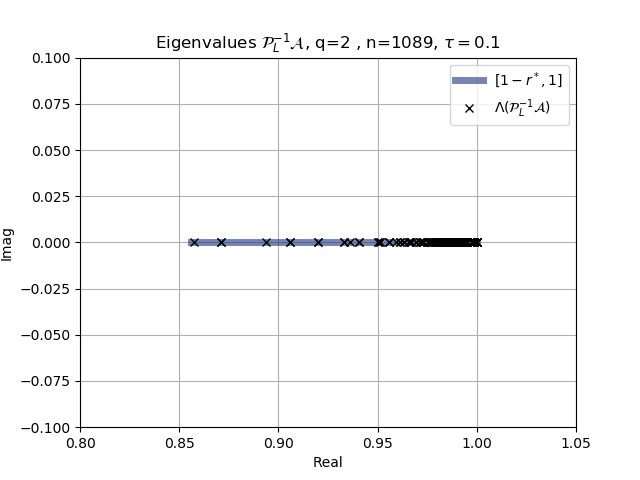} %}
\caption{$q=2$: Eigenvalues of
$\cP_L^{-1}\cA$ - for a range of $\tau \in \{0.001,0.01,0.1$\} }
\label{fig_Ivo1-1}
\end{figure}

All the above derivations can be put in a unique result, which is substantially stronger than Theorem 1 in \cite{IRKtheory}.

\begin{theorem}\label{eigentructure-q=2}
Let $\cP_L^{-1}\cA$ be the preconditioned matrix defined in (\ref{eq_remterm}), with $K,M$ being symmetric positive definite stiffness and mass matrices, respectively, and $q=2$. Then the following properties of the eigenvalues $\lambda$ and the eigenvectors $\bm{v}$ of $\cP_L^{-1}\cA$ hold:
\begin{itemize}
\item $\lambda=1$ with algebraic and geometric multiplicity equal to $n$;
\item $1+f(\mu_\tau)\in [1-r^*_2,1)$, where $\mu_\tau$ is any eigenvalue of $Z_\tau=\tau M^{-1}K$, $f$ is defined in (\ref{f-q=2}), and range$(f)=[-r^*_2,0)$ with
\[
-r^*_2=-\frac{3\sqrt{6}}{11\sqrt{6}+24}=\min_{\mu\in (0,\infty)} f(\mu),
\]
with $\lim_{\mu_\tau=0^+,+\infty}f(\mu_\tau)=0^-$;
\item The eigenvectors related to $\lambda=1$ take the form
\[
\bm{v}=
   \begin{bmatrix}
    \bm{v}_1 \\ \bm{0}
   \end{bmatrix}
\]
for all vectors $\bm{v}_1$ of size $n$. Furthermore, the eigenvector associated with the eigenvalue $f(\mu_\tau)$ has the specific expression
\[
\bm{v}=
   \begin{bmatrix}
    \frac{1}{3}\left(1+\mu_\tau^{-1}\right)\bm{v}_2 \\ \bm{v}_2
   \end{bmatrix}
\]
with $\bm{v}_2$ nonzero vector such that $Z_\tau \bm{v}_2= \mu_\tau\bm{v}_2$.
\end{itemize}
\end{theorem}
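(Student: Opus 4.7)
The plan is to exploit the already-derived decomposition $\cP_L^{-1}\cA = I_{2n} + W_1^{-1}W_2$ in (\ref{eq_remterm}), so that the spectrum of $\cP_L^{-1}\cA$ is obtained from that of $W_1^{-1}W_2$ by a shift of $+1$, and to phrase the latter as the generalized eigenvalue problem $W_2\bm{v}=\lambda W_1\bm{v}$ in the $2\times 2$ block form (\ref{eq_q2_0}) with $\bm{v}=[\bm{v}_1,\bm{v}_2]^\top$. The structural fact to invoke throughout is that $Z_\tau = \tau M^{-1}K$ is similar to the symmetric positive definite matrix $\tau M^{-1/2}KM^{-1/2}$, hence diagonalizable with strictly positive spectrum $\sigma(Z_\tau)\subset(0,\infty)$; this legitimizes both the inversions performed in the elimination step and the subsequent spectral mapping $\sigma(f(Z_\tau))=f(\sigma(Z_\tau))$.

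I would next split the analysis into the cases $\lambda=0$ and $\lambda\neq 0$. In the first case, the first block row of (\ref{eq_q2_0}) immediately forces $\bm{v}_2=\bm{0}$ while the second becomes vacuous, producing the $n$-dimensional eigenspace $\{[\bm{v}_1,\bm{0}]^\top : \bm{v}_1\in\mathbb{C}^n\}$ for $\lambda=1$ of $\cP_L^{-1}\cA$, of geometric multiplicity exactly $n$. In the case $\lambda\neq 0$, the invertibility of $Z_\tau$ permits solving the second block row for $\bm{v}_1$ as a rational function of $Z_\tau$ applied to $\bm{v}_2$, and substituting into the first block row produces the matrix identity $f(Z_\tau)\bm{v}_2=\lambda\bm{v}_2$ with $f$ as in (\ref{f-q=2}); diagonalizability of $Z_\tau$ then yields exactly $n$ nonzero eigenvalues (counted with multiplicity) of the form $f(\mu_\tau)$, with the eigenvectors recovered by back-substitution of the $Z_\tau$-eigenvector $\bm{v}_2$ into the first-row expression for $\bm{v}_1$. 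A dimension count ($2n$ eigenvalues in total, of which $n$ vanish and $n$ do not) then pins the algebraic multiplicity of $\lambda=1$ for $\cP_L^{-1}\cA$ to the geometric value $n$.

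What remains is a scalar calculus exercise on $f$ restricted to $(0,\infty)$: observe that $f<0$ throughout with $\lim_{\mu\to 0^+}f(\mu)=\lim_{\mu\to+\infty}f(\mu)=0^-$, and minimize by differentiating the reciprocal $g=-1/f = 4\mu^{-1}+\tfrac{2}{3}\mu+\tfrac{11}{3}$, whose unique critical point on $(0,\infty)$ is $\mu^*=\sqrt{6}$; this gives $\min_{\mu>0}f(\mu)=-r^*_2$ and the closed-form interval $[1-r^*_2,1)$ for the nonzero-branch eigenvalues of $\cP_L^{-1}\cA$. The main conceptual obstacle, rather than a computational one, is the matching of multiplicities between $Z_\tau$ and $W_1^{-1}W_2$: each repeated eigenvalue of $Z_\tau$ must contribute its full geometric multiplicity through the $f(\mu_\tau)$ branch, which is ensured precisely because $Z_\tau$ is diagonalizable; without this property, Jordan-chain considerations would be needed to close the eigenvalue count and to rule out generalized eigenvectors contaminating the $\lambda=1$ eigenspace.
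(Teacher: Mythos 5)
Your proposal is correct and follows essentially the same route as the paper: set up the block generalized eigenproblem $W_2\bm{v}=\lambda W_1\bm{v}$ from \eqref{eq_q2_0}, split into $\lambda=0$ (the first block row forces $\bm{v}_2=\bm{0}$, giving the $n$-dimensional eigenspace) and $\lambda\neq 0$ (eliminate $\bm{v}_1$ via the second block row to arrive at $f(Z_\tau)\bm{v}_2=\lambda\bm{v}_2$), and minimize the scalar $f$ on $(0,\infty)$ by differentiating the reciprocal $g=-1/f$ to find the critical point $\mu^*=\sqrt 6$. You are slightly more explicit than the paper about the multiplicity bookkeeping (using diagonalizability of $Z_\tau$ and the dimension count $2n=n+n$ to pin down algebraic $=$ geometric multiplicity for $\lambda=1$), but the underlying argument is the same.
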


\begin{remark}[\rm regarding the range of $f$ vs the behavior of the spectrum of $Z_\tau$]\label{rem:rang f vs eigs of Ztau}
In accordance with the notations in Section \ref{ssec:toeplitz-distribution}, first we recall that $M=T_{\bm{n},1}(g_1)$, $h^2 K=T_{\bm{n},1}(g_2)$, $g_1$ is strictly positive trigonometric polynomial in the variable $\theta_1,\theta_2$, $g_2(\theta_1,\theta_2)=4-2\cos(\theta_2)-2\cos(\theta_2)$, $\bm{n}=(n,n)$. Hence, by the standard spectral theory of multilevel Toeplitz matrices and matrix-sequences recalled in the third item of Theorem \ref{th:toeplitz-summa} with $r=1$ (see \cite{GLT1_book,GLT2_book} and references therein), we know that the eigenvalues of $h^2M^{-1}K=T_{\bm{n}}^{-1}(g_1)T_{\bm{n}}(g_2)$ belong to the open interval
\[
\left(\min \frac{g_2}{g_1}, \max \frac{g_2}{g_1}\right)=\left(0,\max \frac{g_2}{g_1}\right).
\]
Therefore, the eigenvalues of
\[
Z_\tau = \tau M^{-1}K = \frac{\tau}{h^2}T_{\bm{n}}^{-1}(g_1)T_{\bm{n}}(g_2)
\]
belong to the interval
\[
I_{h,\tau}=\left(0,\frac{\tau}{h^2}\max \frac{g_2}{g_1}\right).
\]
Now we recall that the used IRK method has precision in time $\tau^{2q-1}=\tau^{3}$ for $q=2$ and precision in space $h^2$. To balance the space and time discretization errors we assume $\tau^3\sim h^2$ so that
\[
I_{h,\tau}=\left(0,C \tau^{-2}\max \frac{g_2}{g_1}\right)
\]
for some positive constant $C$ independent of $h$ and $\tau$. As a consequence, the interval tends to $(0,+\infty)$ as $\tau$ tends to zero. In other words, the estimates, given in Theorem \ref{eigentructure-q=2}, are tight and cannot be improved, unless we impose artificial constraints on the parameters $\tau, h$.
\end{remark}

\subsection{The three stage case}\label{q=3}

In the case of $q=3$, the relevant matrices are the following
\begin{equation}\label{eq_q3_U3}
\widehat{U}_3 = \begin{bmatrix}
    0 & \displaystyle{\frac{87\sqrt{6} - 108}{45\sqrt{6} + 180}} & -\displaystyle{\frac{2(300\sqrt{6} - 450)}{25(45\sqrt{6} + 180)}} \\
    0 & 0 & \displaystyle{\frac{ \displaystyle{\frac{4\sqrt{6}}{15} - \frac{((87\sqrt{6} + 108)(300\sqrt{6} - 450))}{(1125(45\sqrt{6} + 180))} + \frac{2}{5}}}{   \displaystyle{\frac{(87\sqrt{6} - 108)(87\sqrt{6} + 108)}{90(45\sqrt{6} + 180)} - \frac{\sqrt{6}}{2} + 2}}} \\
    0 & 0 & 0
\end{bmatrix},
\end{equation}
\begin{equation}\label{eq_q3_L3}
L_3^{-1} = \begin{bmatrix}
       \displaystyle{\frac{90}{(45\sqrt{6} + 180)}} & 0 & 0 \\
      \displaystyle{\frac{(29 \sqrt{6})}{200} + \frac{9}{50}} &  \displaystyle{\frac{3\sqrt{6}}{40} + \frac{3}{10}} & 0 \\
       \displaystyle{\frac{4}{9} - \frac{\sqrt{6}}{36}} & \displaystyle{\frac{4}{9} + \frac{\sqrt{6}}{36}} & \displaystyle{\frac{1}{9}}
\end{bmatrix}.
\end{equation}
We follow exactly the same idea as in the case of $q=2$.
In an analogous manner it is seen that first $\lambda=0$ has a geometric multiplicity of at least $n$. Second, given $\mu_\tau$ a generic eigenvalue of $Z_\tau$, there are two eigenvalues $\lambda$ of the preconditioned matrix satisfying a nondegenerate second degree polynomial of the form  $a\lambda^2 + b\lambda + c$, where $a, b, c$ are real-valued rational functions of $\mu_\tau$. This is in perfect agreement with the two complex branches that appear in the numerical plots in Figure \ref{fig_Ivo1-2a}. % and \ref{fig_Ivo1-2b}.
   In addition, since $Z_\tau$ is diagonalizable (in fact it is similar to a positive definite matrix)  and has size $n$, the eigenvalues $\lambda$ determined by this procedure are $2n$ which is exactly what we expect and, hence, as for $q=2$, the eigenvalue $\lambda=0$ of $W_1^{-1}W_2$ has algebraic and geometric multiplicity exactly equal to $n$.

Now we proceed with the calculation and since the coefficients of the factors have rather complicated expression, we perform a general computation of parametric type by denoting
\[
\widehat{U}_3 = \begin{bmatrix}
    0 & {\hat u}_{1,2} & {\hat u}_{1,3} \\
    0 & 0 & {\hat u}_{2,3} \\
    0 & 0 & 0
\end{bmatrix},
\qquad
L_3^{-1} = \begin{bmatrix}
       {\ell}_{1,1} & 0 & 0 \\
      {\ell}_{2,1} &  {\ell}_{2,2} & 0 \\
      {\ell}_{3,1} & {\ell}_{3,2} & {\ell}_{3,3}
\end{bmatrix}.
\]
Let $\bm{v} = [\bm{v}_1 , \bm{v}_2 , \bm{v}_3 ]^T$ with $ \bm{v}_i \in \mathbb{C}^n$, $i=1,2,3$. Relation  \eqref{transfromedeig} can be written as
\[
\begin{bmatrix}
    0 & {\hat u}_{1,2} & {\hat u}_{1,3} \\
    0 & 0 & {\hat u}_{2,3} \\
    0 & 0 & 0
\end{bmatrix} \begin{bmatrix}
    \bm{v}_1 \\ \bm{v}_2 \\ \bm{v}_3 \end{bmatrix} = \lambda \Bigg{(} \begin{bmatrix}
    \bm{v}_1 \\ \bm{v}_2 \\ \bm{v}_3 \end{bmatrix} + \begin{bmatrix}
       {\ell}_{1,1} & 0 & 0 \\
      {\ell}_{2,1} &  {\ell}_{2,2} & 0 \\
      {\ell}_{3,1} & {\ell}_{3,2} & {\ell}_{3,3}
\end{bmatrix}
 \begin{bmatrix}   Z_\tau \bm{v}_1 \\ Z_\tau \bm{v}_2 \\ Z_\tau \bm{v}_3 \end{bmatrix} \Bigg{)}.
\]
From the last block equation we have $\bm{0} = \lambda \left(\bm{v}_3 + \sum_{j=1}^3 {\ell}_{3,j} Z_\tau \bm{v}_j\right)$ which is satisfied for $\lambda=0$. By choosing $\bm{v}_1= \bm{v}_2 = \bm{0}$ and $\lambda=0$, we deduce that  \eqref{transfromedeig} holds for every choice of $\bm{v}_3$ so that $\lambda=0$ is an eigenvalue with geometric multiplicity at least $n$.
Let now $\lambda \neq 0$. The third block row leads to
\[
-( I_n + {\ell}_{3,3} Z_\tau) \bm{v}_3 = {\ell}_{3,1} Z_\tau  \bm{v}_1 + {\ell}_{3,2} Z_\tau  \bm{v}_2.
\]
Assuming that $I_n + {\ell}_{3,3} Z_\tau$ is invertible, we find that
\[
\bm{v}_3 = -( I_n + {\ell}_{3,3} Z_\tau)^{-1} \Big{(} {\ell}_{3,1} Z_\tau  \bm{v}_1 + {\ell}_{3,2} Z_\tau  \bm{v}_2 \Big{)}.
\]
Now we write the formal expression of the first and of the second block rows, that is,
\begin{eqnarray*}
{\hat u}_{2,3} \bm{v}_3 & = &\lambda \Big{(} \bm{v}_2 + {\ell}_{2,1} Z_\tau \bm{v}_1 +  {\ell}_{2,2} Z_\tau)Z \bm{v_2}     \Big{)}, \\
{\hat u}_{1,2} \bm{v}_2 + {\hat u}_{1,3}\bm{v}_3 & = &\lambda \Big{(} \bm{v}_1 + {\ell}_{1,1} Z_\tau \bm{v}_1      \Big{)}.
\end{eqnarray*}
Next we replace the explicit form of $\bm{v}_3$ in both equalities and obtain
\begin{eqnarray*}
-{\hat u}_{2,3} ( I_n + {\ell}_{3,3} Z_\tau)^{-1} \Big{(} {\ell}_{3,1} Z_\tau  \bm{v}_1 + {\ell}_{3,2} Z_\tau  \bm{v}_2 \Big{)} & = &\lambda \Big{(} \bm{v}_2 + {\ell}_{2,1} Z_\tau \bm{v}_1 +  {\ell}_{2,2} Z_\tau) \bm{v_2}     \Big{)}, \\
{\hat u}_{1,2} \bm{v}_2 - {\hat u}_{1,3}( I_n + {\ell}_{3,3} Z_\tau)^{-1} \Big{(} {\ell}_{3,1} Z_\tau  \bm{v}_1 + {\ell}_{3,2} Z_\tau  \bm{v}_2 \Big{)} & = &\lambda \Big{(} \bm{v}_1 + {\ell}_{1,1} Z_\tau \bm{v}_1      \Big{)}.
\end{eqnarray*}
The second block equation allows us to express  $\bm{v}_2$ as a function of $\lambda, Z_\tau$, and $\bm{v}_1$.
Indeed, setting
\begin{eqnarray*}
l_1(\lambda,Z_\tau) & = & -\left[{\hat u}_{2,3} ( I_n + {\ell}_{3,3} Z_\tau)^{-1}{\ell}_{3,2}Z_\tau + \lambda (I_n+{\ell}_{2,2} Z_\tau)\right],\\ l_2(\lambda,Z_\tau) & = & {\hat u}_{2,3} ( I_n + {\ell}_{3,3} Z_\tau)^{-1}{\ell}_{3,1} Z_\tau +\lambda {\ell}_{2,1} Z_\tau,
\end{eqnarray*}
we obtain an expression for $\bm{v}_2$,
$
\bm{v}_2 = l_1^{-1}(\lambda,Z_\tau)l_2(\lambda,Z_\tau)\bm{v}_1.
$

Finally we are ready for the last substitution in order to obtain a generalized eigenvalue problem involving only the vector $\bm{v}_1$.
Taking into account that, since $Z_\tau$ is similar to a positive definite matrix, it  is diagonalizable,  choosing $\bm{v}_1$ as the eigenvector  of $Z_\tau$ associated with the eigenvalue $\mu_\tau$, we obtain the relation
\begin{equation}\label{lambda second degree}
-\lambda (1 + {\ell}_{1,1} \mu_\tau)(1 + {\ell}_{3,3} \mu_\tau)l_1(\lambda,\mu_\tau)
- {\hat u}_{1,3}{\ell}_{3,2}\mu_\tau l_2(\lambda,\mu_\tau) + {\hat u}_{1,2}(1 + {\ell}_{3,3} \mu_\tau)l_2(\lambda,\mu_\tau)=0.
\end{equation}
Since $l_1(\lambda,\mu_\tau)$ and $l_2(\lambda,\mu_\tau)$ are first degree polynomials in the variable $\lambda$, the global resulting equation
(\ref{lambda second degree}) is a nondegenerate second degree equation in the variable $\lambda$.

As a consequence, the solution is given by two branches $\lambda_1(\mu_\tau), \lambda_2(\mu_\tau)$ which are irrational, nontrascendental functions of $\mu_\tau\in (0,\infty)$: recall that the eigenvalues of the whole preconditioned matrix are $1$ with algebraic and geometric multiplicity $n$,
$1+\lambda_1(\mu_\tau), 1+\lambda_2(\mu_\tau)$ for every $\mu_\tau$ eigenvalue of the diagonalizable matrix $Z_\tau$.

The analysis in the specific setting of $q=3$ and with the specific coefficients from \rref{eq_q3_U3} and \rref{eq_q3_L3} allows to claim the following:
\begin{itemize}
\item the terms $I_n+\ell_{i,i}Z_\tau$, $i=1,2,3$, are all invertible since $Z_\tau$ is similar to a positive definite matrix and because $\ell_{i,i}$, $i=1,2,3$, are all positive coefficients;
\item by substituting the values of the parameters, the remaining eigenvalues of the preconditioned matrix lie in a disk centered in $1$ and of radius $r^*_3<1$;
\item furthermore, by substituting the values of the parameters, the asymptotic analysis of equation (\ref{lambda second degree}) implies that for $\mu_\tau\rightarrow 0^+$, the two eigenvalues $\lambda_1(f(\mu_\tau)), \lambda_2(f(\mu_\tau))$ are complex conjugate tending to $0$ and with their real part converging to $0^-$ as $\mu_\tau\rightarrow 0^+$, with Re$(\lambda_1(f(\mu_\tau)))=$Re$(\lambda_2(f(\mu_\tau))=o($Im$(\lambda_1(f(\mu_\tau))))$ and
Im$(\lambda_1(f(\mu_\tau)))=-$Im$(\lambda_2(f(\mu_\tau)))$;
\item finally, by substituting the values of the parameters, the asymptotic analysis of equation (\ref{lambda second degree}) implies that for $\mu_\tau\rightarrow +\infty$, the two eigenvalues $\lambda_1(f(\mu_\tau))$, $\lambda_2(f(\mu_\tau))$ are complex conjugate tending to $0$ and with their real part converging to $0^-$ as $\mu_\tau\rightarrow +\infty$, with $$\text{Re}(\lambda_1(f(\mu_\tau)))=\text{Re}(\lambda_2(f(\mu_\tau))=o(\text{Im}(\lambda_1(f(\mu_\tau))))$$ and Im$(\lambda_1(f(\mu_\tau)))=-$Im$(\lambda_2(f(\mu_\tau)))$.
\end{itemize}
 The above results are resumed in Theorem \ref{eigentructure-q=3}.

\begin{theorem}\label{eigentructure-q=3}
Let $\cP_L^{-1}\cA$ be the preconditioned matrix defined in (\ref{eq_remterm}), with $K,M$ being symmetric positive definite stiffness and mass matrices, respectively, and $q=3$. Then the eigenvalues of $\cP_L^{-1}\cA$ are
\begin{itemize}
\item $\lambda=1$ with algebraic and geometric multiplicity equal to $n$;
\item $1+\lambda_1(f(\mu_\tau))$ and $1+\lambda_2(f(\mu_\tau))$ where $\mu_\tau$ is any eigenvalue of $Z_\tau=\tau M^{-1}K$, $f$ is defined in (\ref{lambda second degree});
\item All the eigenvalues of the preconditioned matrix lie in a disk centered in $1$ and of radius $r^*_3<1$, {numerical calculation yields $r^*_3 \approx 0.206$};
\item For $\mu_\tau\rightarrow 0^+$, the two eigenvalues $\lambda_1(f(\mu_\tau))$ and $\lambda_2f((\mu_\tau))$ are complex conjugate tending to $0$ and with their real part converging to $0^-$ as $\mu_\tau\rightarrow 0^+$, with
\[
\begin{array}{l}
{\rm Re}(\lambda_1(f(\mu_\tau)))={\rm Re}(\lambda_2(f(\mu_\tau))=o({\rm Im}(\lambda_1(f(\mu_\tau)))), \\ \ {\rm Im}(\lambda_1(f(\mu_\tau)))=
-{\rm Im}(\lambda_2(f(\mu_\tau)));
\end{array}
\]
\item For $\mu_\tau\rightarrow +\infty$, the two eigenvalues $\lambda_1(f(\mu_\tau))$ and $\lambda_2(f(\mu_\tau))$ are complex conjugate tending to $0$ and with their real part converging to $0^-$ as $\mu_\tau\rightarrow +\infty$, with
\begin{align*}
    {\rm Re}(\lambda_1(f(\mu_\tau))) &={\rm Re}(\lambda_2(f(\mu_\tau))=o({\rm Im}(\lambda_1(f(\mu_\tau)))), \\
    {\rm Im}(\lambda_1(f(\mu_\tau))) &=- \rm Im (\lambda_2(f(\mu_\tau)));
\end{align*}

\item The eigenvectors related to $\lambda=1$ take the form
\[
\bm{v}=
   \begin{bmatrix}
    \bm{v}_1 \\ \bm{0} \\ \bm{0}
   \end{bmatrix}
\]
for all vectors $\bm{v}_1$ of size $n$. Furthermore, the eigenvectors associated to the eigenvalues $\lambda_1(f(\mu_\tau))$ and $\lambda_2(f(\mu_\tau))$ have the specific expression
\[
\bm{v}=
   \begin{bmatrix}
    \bm{v}_1 \\
    \alpha(\mu_\tau)\bm{v}_1 \\ \beta(\mu_\tau)\bm{v}_1
   \end{bmatrix}
\]
with $\bm{v}_1$ nonzero vector such that $Z_\tau \bm{v}_1= \mu_\tau\bm{v}_1$ and
\begin{align*}
\alpha(\mu_\tau) &=l_1^{-1}(\lambda,\mu_\tau)l_2(\lambda,\mu_\tau) \\
    \beta(\mu_\tau) &=-\mu_\tau( 1 + {\ell}_{3,3} \mu_\tau)^{-1}\left[{\ell}_{3,1}  + {\ell}_{3,2} \alpha(\mu_\tau)\right]
\end{align*}

as in the previous computations.
\end{itemize}
\end{theorem}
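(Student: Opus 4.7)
The plan is to imitate the $q=2$ argument of Section \ref{q=2} but with the heavier bookkeeping that the $3\times 3$ factors $L_3^{-1}$ and $\widehat{U}_3$ require. Throughout, I work on the shifted matrix $W_1^{-1}W_2$ (so that $\cP_L^{-1}\cA=I_{3n}+W_1^{-1}W_2$), write $\bm{v}=[\bm{v}_1,\bm{v}_2,\bm{v}_3]^T$ with $\bm{v}_i\in\C^n$, and rely on the equivalent relation \eqref{transfromedeig} with the parametric entries $\ell_{i,j}$, $\hat u_{i,j}$.

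First I handle the eigenvalue $\lambda=0$ of $W_1^{-1}W_2$ (i.e.\ $\lambda=1$ of the preconditioned matrix). Setting $\bm{v}_2=\bm{v}_3=\bm{0}$ in \eqref{transfromedeig} makes all three block rows vanish for any $\bm{v}_1$, so the geometric multiplicity is at least $n$. For the upper bound I count degrees of freedom: every nonzero eigenvalue arises from the second-degree polynomial in $\lambda$ obtained below, and since $Z_\tau$ is similar to a positive definite matrix (hence has $n$ distinct-enough, diagonalizable eigenvalues $\mu_\tau$), the reduction produces exactly $2n$ nonzero $\lambda$'s, leaving geometric and algebraic multiplicity of $\lambda=0$ equal to $n$.

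Next, for $\lambda\neq 0$, I eliminate variables from the bottom up. The third block row gives $\bm{v}_3=-(I_n+\ell_{3,3}Z_\tau)^{-1}(\ell_{3,1}Z_\tau\bm{v}_1+\ell_{3,2}Z_\tau\bm{v}_2)$; the invertibility follows at once from $Z_\tau$ being similar to a positive definite matrix and $\ell_{3,3}>0$. Substituting $\bm{v}_3$ into the second row yields the linear relation $l_1(\lambda,Z_\tau)\bm{v}_2=l_2(\lambda,Z_\tau)\bm{v}_1$ with the explicit $l_1,l_2$ of the excerpt. To decouple the problem I choose $\bm{v}_1$ to be an eigenvector of $Z_\tau$ with eigenvalue $\mu_\tau$, so all three vectors are proportional to $\bm{v}_1$ and every matrix function of $Z_\tau$ collapses to a scalar function of $\mu_\tau$. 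Plugging the resulting $\bm{v}_2=\alpha(\mu_\tau)\bm{v}_1$ and $\bm{v}_3=\beta(\mu_\tau)\bm{v}_1$ into the first block row and clearing the denominator $(1+\ell_{1,1}\mu_\tau)(1+\ell_{3,3}\mu_\tau)$ gives precisely the scalar equation \eqref{lambda second degree}. Since $l_1$ and $l_2$ are of degree one in $\lambda$, this equation is a nondegenerate quadratic in $\lambda$, producing the two branches $\lambda_1(\mu_\tau),\lambda_2(\mu_\tau)$ and, simultaneously, the stated eigenvector formulas.

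The remaining items are the disk bound and the two asymptotic regimes. For the bound I would substitute the numerical values of $\ell_{i,j},\hat u_{i,j}$ from \eqref{eq_q3_U3}--\eqref{eq_q3_L3} into \eqref{lambda second degree}, writing it as $A(\mu_\tau)\lambda^2+B(\mu_\tau)\lambda+C(\mu_\tau)=0$ with $A,B,C$ rational in $\mu_\tau$ of degree at most two, and study $\max_{\mu_\tau>0}|\lambda_{1,2}(\mu_\tau)|$ via the closed-form quadratic formula. Because the coefficients are real rational functions, the maximum is attained either at a critical point (derivative of $|\lambda|^2$ in $\mu_\tau$ vanishing) or at the two endpoints $\mu_\tau\to 0^+$ and $\mu_\tau\to+\infty$; checking the asymptotic regimes yields moduli going to zero, so the maximum lives at an interior critical point and is computed numerically to be $r^*_3\approx 0.206<1$. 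For the two asymptotic statements, I expand $A,B,C$ in Laurent series at $\mu_\tau=0$ and at $\mu_\tau=\infty$: in both regimes $C(\mu_\tau)/A(\mu_\tau)\to 0$ while the discriminant $B^2-4AC$ becomes negative with the square root of $|B^2-4AC|$ dominating $B$ in modulus, giving complex conjugate roots whose common real part is $o$ of the imaginary part and both of which tend to $0^-$ in real part. The main obstacle is the disk claim: one must rule out the possibility that $|\lambda_{1,2}(\mu_\tau)|$ exceeds $1$ somewhere in $(0,\infty)$ before pinning down the optimizer; the endpoint analysis plus continuity reduces this to inspecting the single interior critical equation, which can be verified symbolically using the explicit values and then evaluated numerically to produce the quoted $r^*_3$.
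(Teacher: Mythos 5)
Your proposal follows essentially the same route as the paper's derivation in Section~\ref{q=3}: isolating $\lambda=1$ with multiplicity $n$ by the same vanishing argument and the counting argument via the diagonalizability of $Z_\tau$; eliminating from the third block row upward under the parametric $\ell_{i,j},\hat u_{i,j}$; collapsing to the scalar quadratic \eqref{lambda second degree} by choosing $\bm{v}_1$ in an eigenbasis of $Z_\tau$, which also yields the stated eigenvector formulas; and then deferring the disk radius $r^*_3$ and the two asymptotic regimes to substitution of the explicit coefficients followed by symbolic/numerical inspection. The only genuine addition is your explicit remark that, since $|\lambda_{1,2}(\mu_\tau)|\to 0$ at both endpoints, the supremum over $(0,\infty)$ is attained at an interior critical point (augmenting the candidate set by the finitely many sign changes of the discriminant), which sharpens a step the paper leaves implicit but does not change the argument.
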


\begin{remark}[\rm regarding the range of $f$ vs the behavior of the spectrum of $Z_\tau$]\label{rem:rang f vs eigs of Ztau q=3}
As in Remark \ref{rem:rang f vs eigs of Ztau} and the following notation and results in Section \ref{ssec:toeplitz-distribution}, we have $M=T_{\bm{n},1}(g_1)$, $h^2 K=T_{\bm{n},1}(g_2)$, $g_1$ strictly positive trigonometric polynomial in the variable $\theta_1,\theta_2$, $g_2(\theta_1,\theta_2)=4-2\cos(\theta_2)-2\cos(\theta_2)$, $\bm{n}=(n,n)$. Following verbatim the same reasoning as  for $q=2$, the eigenvalues of $h^2M^{-1}K=T_{\bm{n},1}^{-1}(g_1)T_{\bm{n},1}(g_2)$ belong to the open interval
\[
\left(\min \frac{g_2}{g_1}, \max \frac{g_2}{g_1}\right)=\left(0,\max \frac{g_2}{g_1}\right)
\]
so that the eigenvalues of $Z_\tau = \tau M^{-1}K = \frac{\tau}{h^2}T_{\bm{n},1}^{-1}(g_1)T_{\bm{n},1}(g_2)$ all remain in the interval
\[
I_{h,\tau}=\left(0,\frac{\tau}{h^2}\max \frac{g_2}{g_1}\right).
\]
We use again the fact that the IRK method has accuracy in time $\tau^{2q-1}=\tau^{5}$ for $q=3$ and accuracy in space $h^2$. Hence, we assume $\tau^5\sim h^2$ so that
\[
I_{h,\tau}=\left(0,C \tau^{-4}\max \frac{g_2}{g_1}\right)
\]
for some positive constant $C$ independent of $h$ and $\tau$. As a consequence the interval tends to $(0,+\infty)$ as $\tau$ tends to zero. In other words, the estimates given in Theorem \ref{eigentructure-q=3} are tight and cannot be improved, unless we impose artificial constraints on the parameters $\tau, h$.
\end{remark}

\begin{figure}[htbp]
\centering
\includegraphics[width=0.4\textwidth]{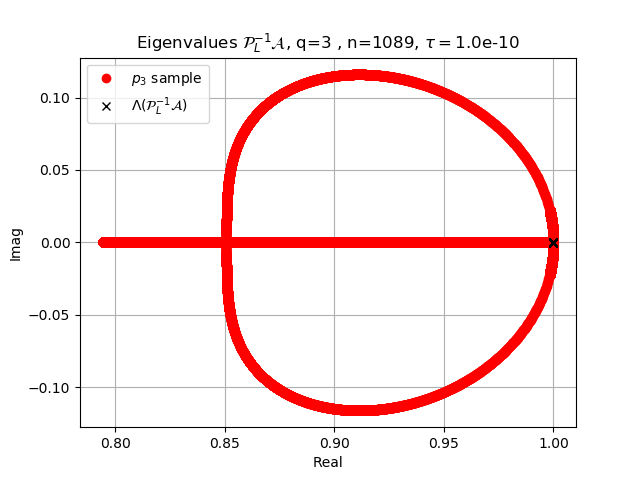} 
\includegraphics[width=0.4\textwidth]{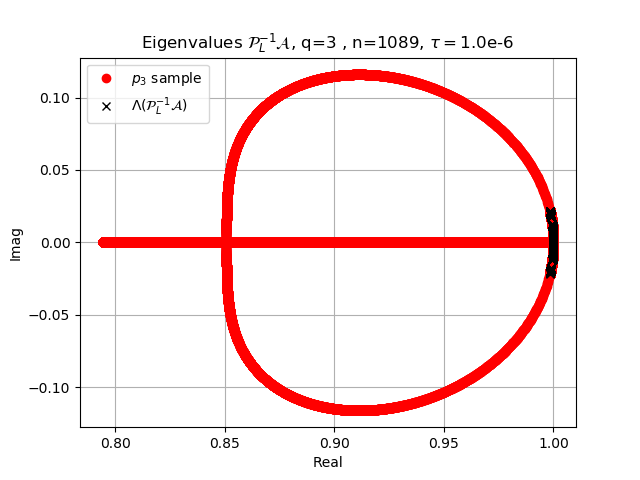} \\ 
\includegraphics[width=0.4\textwidth]{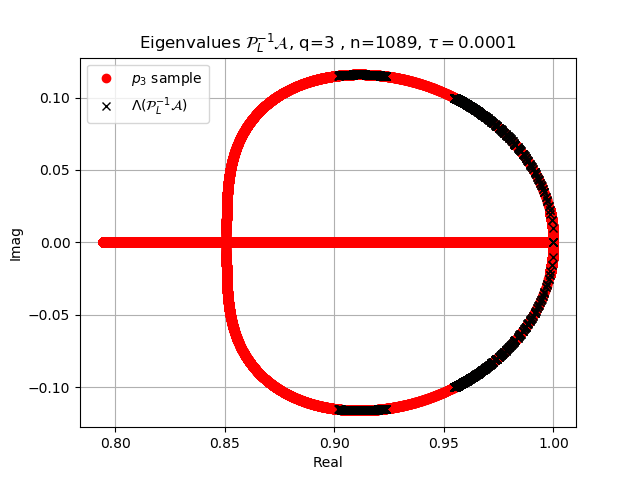} 
\includegraphics[width=0.4\textwidth]{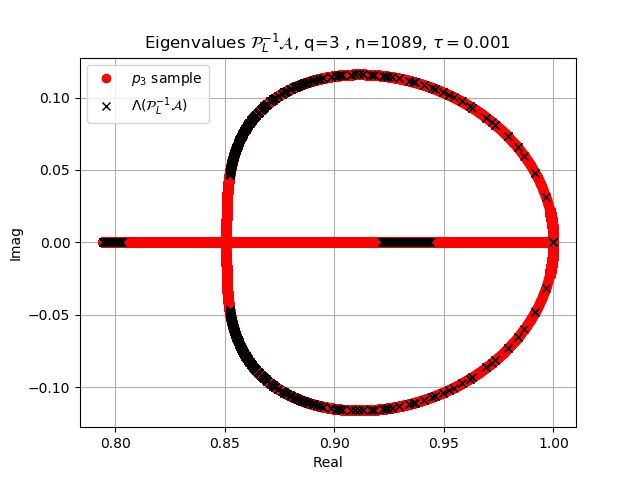} \\ 
\includegraphics[width=0.4\textwidth]{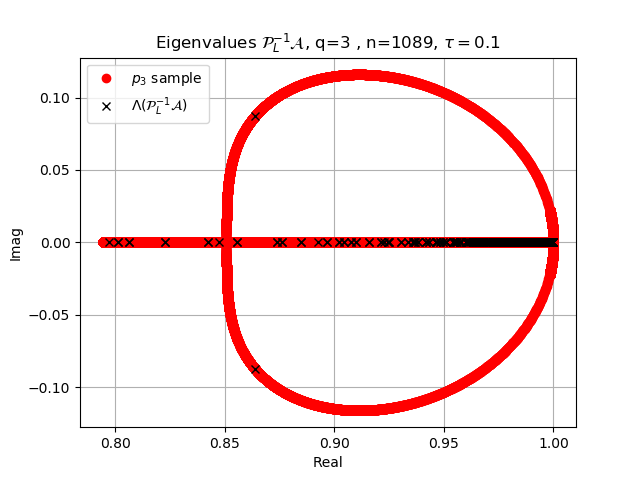}
\includegraphics[width=0.4\textwidth]{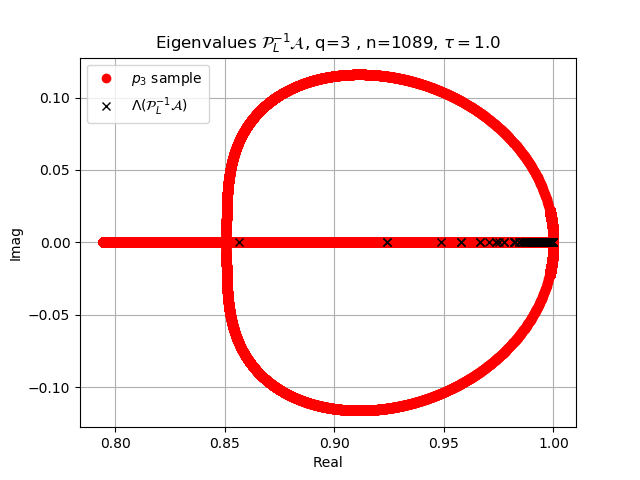} \\ 
\includegraphics[width=0.4\textwidth]{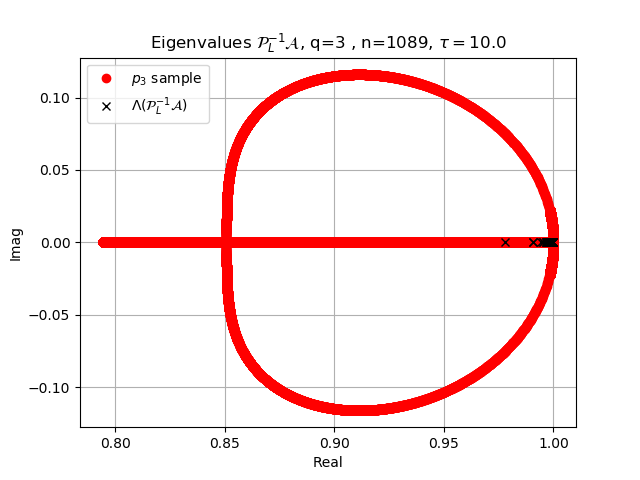}
\includegraphics[width=0.4\textwidth]{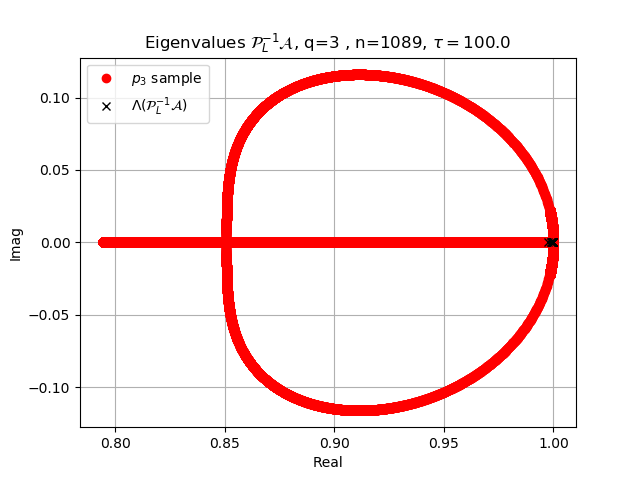}
\caption{$q=3$: Eigenvalues of $\cP_L^{-1}\cA$ - for a range of $\tau \in [10^{-10},10^2]$, keeping $h$ constant }
\label{fig_Ivo1-2a}
\end{figure}

\begin{figure}[htbp]
\centering
\includegraphics[width=0.49\textwidth]{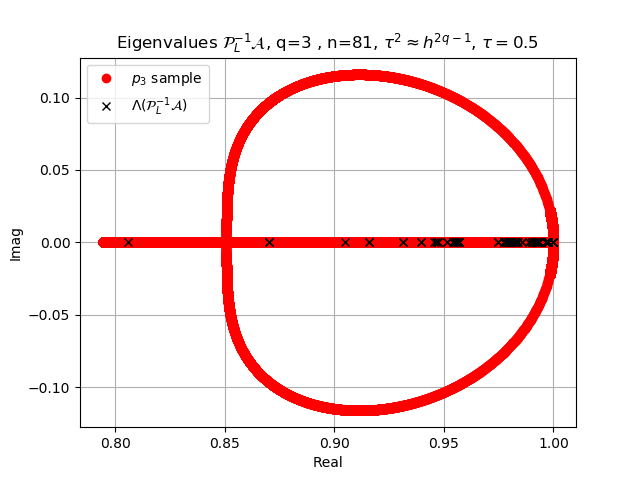}
\includegraphics[width=0.49\textwidth]{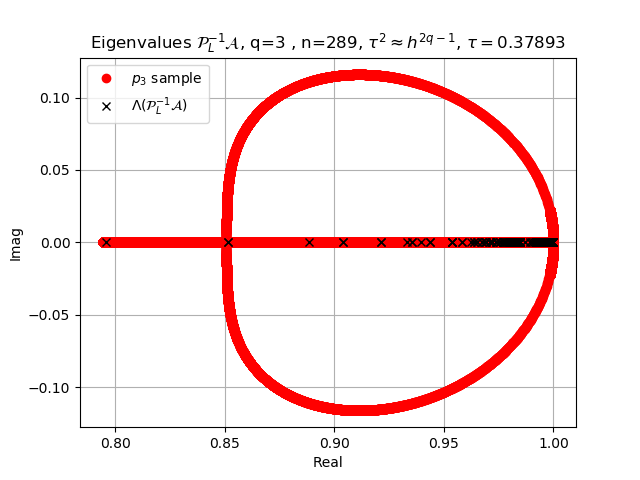}
\includegraphics[width=0.49\textwidth]{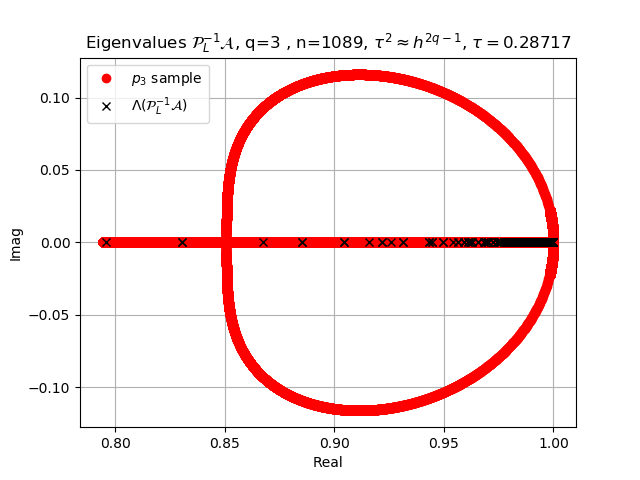}
\includegraphics[width=0.49\textwidth]{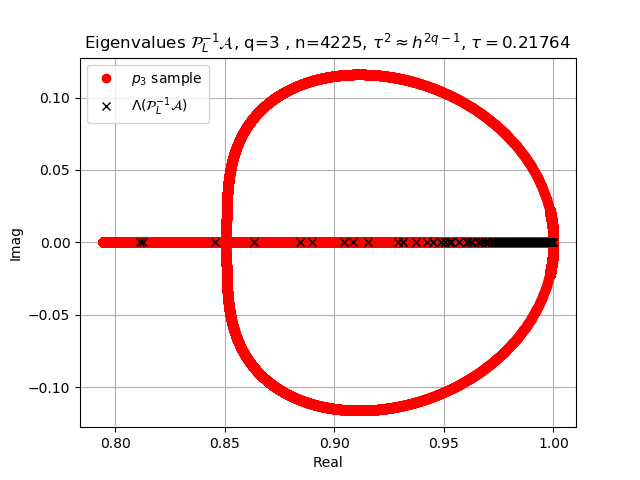}
\caption{$q=3$: Eigenvalues of $\cP_L^{-1}\cA$ - for a range of $h$ choosing $\tau$ such that $\tau^{2q-1} = h^2$.}
\label{fig_Ivo1-3}
\end{figure}

\begin{figure}[htbp]
\centering
\includegraphics[width=0.49\textwidth]{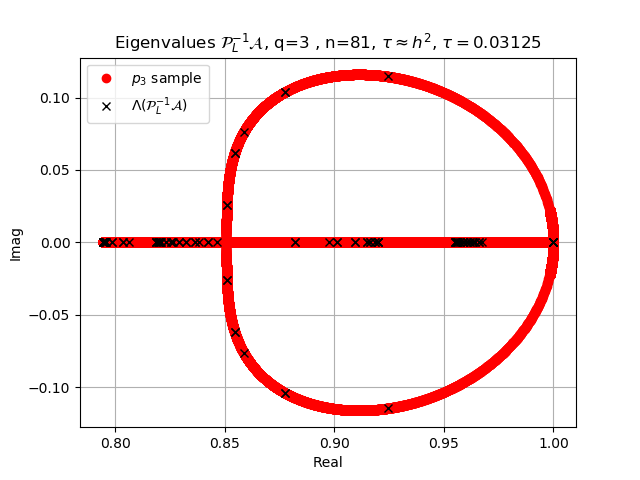}
\includegraphics[width=0.49\textwidth]{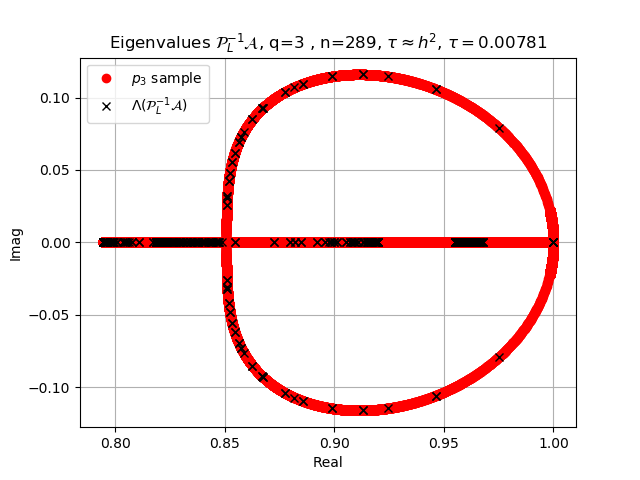}
\includegraphics[width=0.49\textwidth]{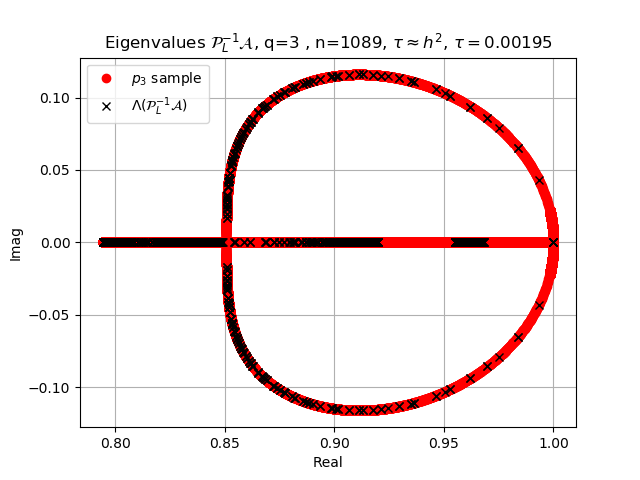}
\includegraphics[width=0.49\textwidth]{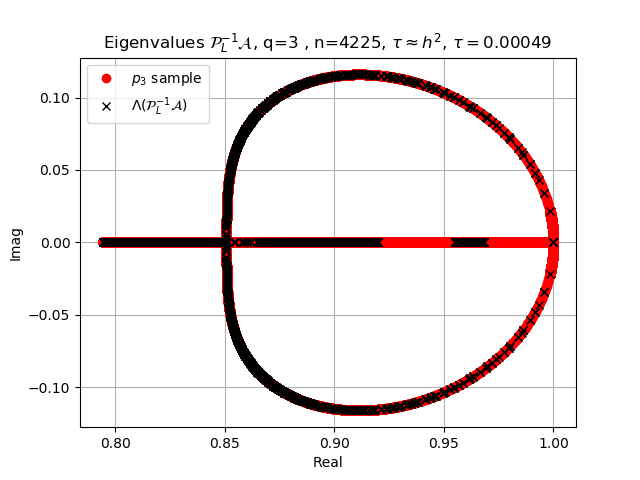}
\caption{$q=3$: Eigenvalues of $\cP_L^{-1}\cA$ - for a range of $h$ choosing $\tau$ such that $\tau = h^2$.}
\label{fig_Ivo1-4}
\end{figure}

\subsection{The general case with $q$ stages, $q>3$}\label{q>3}
 We emphasize that the procedure followed in Section \ref{q=3}  enables the analysis of the generic case $q>3$ since it is clear that the same scheme leads to a polynomial in $\lambda$ of degree $q-1$ and, hence, beside the $n$ trivial eigenvalues equal to $1$, to any of the $n$ eigenvalues $\mu_\tau$ of $Z_\tau$ there correspond exactly $q-1$ eigenvalues, which are the roots of a polynomial of degree $q-1$ with real-valued rational coefficients in $\mu_\tau$.

As a consequence, their solution is given by $q-1$ branches $\lambda_1(\mu_\tau), \ldots, \lambda_{q-1}(\mu_\tau)$ which are irrational, nontrascendental functions of $\mu_\tau\in (0,\infty)$: recall that the eigenvalues of the whole preconditioned matrix are $1$ with algebraic and geometric multiplicity $n$, $1+\lambda_1(\mu_\tau),\ldots, 1+\lambda_{q-1}(\mu_\tau)$ for every $\mu_\tau$ eigenvalue of the diagonalizable matrix $Z_\tau$.

The analysis in the specific setting of $q=4,5,6,7,8,9,10$ with the coefficients of the corresponding $\widehat{U}_q$ and $L_q^{-1}$ allows to claim the following:
\begin{itemize}
\item the procedure does not stop thanks to nonsingular character of $I_n+\ell_{i,i}Z_\tau$, $i=1,\ldots,q$, since $Z_\tau$ has positive eigenvalues (indeed it is similar to a positive definite matrix) and because $\ell_{i,i}$, $i=1,\ldots,q$, are all positive coefficients;
\item by substituting the values of the parameters, the remaining eigenvalues of the preconditioned matrix lie in a disk centered in $1$ and of radius $r^*_q<1$, $q=4,5,6,7,8,9,10$;
\item furthermore, by substituting the values of the parameters, the asymptotic analysis of the spectrum implies that for $\mu_\tau\rightarrow 0^+$, the eigenvalues $\lambda_1(f(\mu_\tau))$, $\ldots \lambda_{q-1}(f(\mu_\tau))$ tend to $0$ as $\mu_\tau\rightarrow 0^+$ and as $\mu_\tau\rightarrow +\infty$.
\end{itemize}

As in Remark \ref{rem:rang f vs eigs of Ztau} and Remark \ref{rem:rang f vs eigs of Ztau q=3}, $M=T_{\bm{n}}(g_1)$, $h^2 K=T_{\bm{n}}(g_2)$, $g_1$ is a strictly positive trigonometric polynomial in the variable $\theta_1,\theta_2$, $g_2(\theta_1,\theta_2)=4-2\cos(\theta_2)-2\cos(\theta_2)$, $\bm{n}=(n,n)$. Following verbatim the same reasoning as for $q=2$ and $q=3$, the eigenvalues of $h^2M^{-1}K=T_{\bm{n}}^{-1}(g_1)T_{\bm{n}}(g_2)$ belong to the open interval
\[
\left(\min \frac{g_2}{g_1}, \max \frac{g_2}{g_1}\right)=\left(0,\max \frac{g_2}{g_1}\right)
\]
so that the eigenvalues of $Z_\tau = \tau M^{-1}K = \frac{\tau}{h^2}T_{\bm{n}}^{-1}(g_1)T_{\bm{n}}(g_2)$ all remain in the interval
\[
I_{h,\tau}=\left(0,\frac{\tau}{h^2}\max \frac{g_2}{g_1}\right).
\]
As before, since the used IRK method has accuracy in time $\tau^{2q-1}$ and accuracy in space $h^2$, we assume $\tau^{2q-1}\sim h^2$ and therefore we infer
\[
I_{h,\tau}=\left(0,C \tau^{2-2q}\max \frac{g_2}{g_1}\right)=\left(0,C h^{-2 + \frac{2}{2q-1}}\max \frac{g_2}{g_1}\right)
\]
for some positive constant $C$ independent of $h$ and $\tau$ and with $\frac{2}{2q-1}$ belonging to $(0,\frac{2}{3}]$ and tending to zero as $q$ tends to infinity. As a consequence the interval tends to $(0,+\infty)$ as $\tau$ and $h$ tend to zero. In conclusion, the estimates given in Theorem \ref{eigentructure-q=3} are tight and cannot be improved, unless we impose artificial constraints on the parameters $\tau, h$.

\section{Spectral Analysis: distribution results}\label{sec:sp_distr}
In this section we collect the spectral results of global distributional type in the spirit of Definition \ref{def:spectral-distr}. Generally speaking these findings are difficult to obtain especially in a non Hermitian setting. However, in the current context, given the explicit expression found in Theorem \ref{eigentructure-q=2} for $q=2$, Theorem \ref{eigentructure-q=3} for $q=3$, and Section \ref{q>3} for values of $q$ larger than $3$, the distributional results become a straightforward consequence of Definition \ref{def:spectral-distr}, given the degree of freedom represented by the choice of the test functions.

\begin{theorem}\label{th:distr1}
Assuming that $\{Z_\tau\}_{\bfn}\sim_{\lambda} s$, ${\bfn}=(n,n)$, $Z_\tau=\tau M^{-1} K$, i.e., the matrix-sequence  $\{Z_\tau\}_{\bfn}$ is spectrally distributed as the measurable function $s$, then the preconditioned matrix-sequence  $\{\cP_L^{-1}\cA\}$ enjoys the relation
\[
\{\cP_L^{-1}\cA\}\sim_{\lambda} \bft_s
\]
with $\bft_s={\rm diag}(1,1+\lambda_1(f(s)),\ldots,1+\lambda_{q-1}(f(s))$. Here $\lambda_1(f)=f$ with $f$ as in (\ref{f-q=2}), if $q=2$, $f$ is as in (\ref{lambda second degree}) if $q=3$, while the general form of $f$ is deduced as in the procedure sketched in Section \ref{q>3} for $q>3$.
\end{theorem}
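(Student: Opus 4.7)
The plan is to apply Definition \ref{def:spectral-distr} directly, exploiting the complete eigenvalue decomposition of $\cP_L^{-1}\cA$ supplied by Theorems \ref{eigentructure-q=2}, \ref{eigentructure-q=3} and the procedure of Section \ref{q>3}. Fix $F\in C_c(\mathbb{R})$. The matrix $\cP_L^{-1}\cA$ has size $qn$ and its eigenvalues are $1$ (with algebraic multiplicity exactly $n$) together with, for every eigenvalue $\mu_\tau^{(k)}$ of $Z_\tau$, $k=1,\ldots,n$, the $q-1$ values $1+\lambda_j(f(\mu_\tau^{(k)}))$, $j=1,\ldots,q-1$, obtained as roots of the degree $q-1$ polynomial in $\lambda$ built in Section \ref{q>3}. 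Hence
\[
\Sigma_\lambda(F,\cP_L^{-1}\cA)=\frac{1}{q}F(1)+\frac{1}{q}\cdot\frac{1}{n}\sum_{k=1}^{n}\Phi(\mu_\tau^{(k)}),\qquad \Phi(\mu):=\sum_{j=1}^{q-1}F\bigl(1+\lambda_j(f(\mu))\bigr).
\]

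Although the individual branches $\lambda_j(f(\cdot))$ are only locally single-valued (they are the roots of a polynomial whose coefficients are rational in $\mu$), the sum $\Phi$ is a symmetric function of these roots and therefore, via the elementary symmetric polynomial representation, is a continuous (in fact real-analytic away from discriminant loci) function of $\mu$ on $(0,+\infty)$. Moreover, the asymptotic analysis of Sections \ref{q=2}--\ref{q>3} shows that every branch tends to $0$ both as $\mu\to 0^+$ and as $\mu\to+\infty$, so $\Phi(\mu)\to(q-1)F(1)$ at the two endpoints and $\Phi$ extends continuously to $[0,+\infty]$, in particular it is bounded. Next I would pass to the limit in $\tfrac{1}{n}\sum_k\Phi(\mu_\tau^{(k)})$ by means of the hypothesis $\{Z_\tau\}_{\bfn}\sim_\lambda s$, applied to the decomposition $\Phi=(q-1)F(1)+\Phi_0$ with $\Phi_0\in C_0((0,+\infty))$ (continuous and vanishing at both endpoints). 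Uniform approximation of $\Phi_0$ by functions in $C_c(\mathbb{R})$ supported inside $(0,+\infty)$, combined with the fact that $Z_\tau$ is similar to a positive definite matrix and so its spectrum lies entirely in $(0,+\infty)$, yields
\[
\lim_{\bfn\to\infty}\frac{1}{n}\sum_{k=1}^{n}\Phi(\mu_\tau^{(k)})=\frac{1}{\mu_m(D)}\int_D\Phi(s(\bftheta))\,d\mu_m(\bftheta).
\]

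Combining the two contributions, the limit of $\Sigma_\lambda(F,\cP_L^{-1}\cA)$ becomes
\[
\frac{1}{\mu_m(D)}\int_D\frac{1}{q}\Bigl[F(1)+\sum_{j=1}^{q-1}F\bigl(1+\lambda_j(f(s(\bftheta)))\bigr)\Bigr]d\mu_m(\bftheta),
\]
and since $\bft_s(\bftheta)$ is diagonal its eigenvalues are exactly $1,1+\lambda_1(f(s(\bftheta))),\ldots,1+\lambda_{q-1}(f(s(\bftheta)))$, so the right-hand side coincides with the Weyl integral $\frac{1}{\mu_m(D)}\int_D\frac{1}{q}\sum_{k=1}^{q}F(\lambda_k(\bft_s(\bftheta)))\,d\mu_m(\bftheta)$ of Definition \ref{def:spectral-distr}, proving $\{\cP_L^{-1}\cA\}\sim_\lambda \bft_s$. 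The step I expect to be the main obstacle is the extension of the class of admissible test functions from $C_c(\mathbb{R})$ to $\Phi$, which is bounded and continuous but not compactly supported whenever $F(1)\neq 0$: the decomposition $\Phi=(q-1)F(1)+\Phi_0$ reduces the issue to a standard approximation, but one must also control possible outliers in the spectrum of $Z_\tau$, a point that is guaranteed in the Toeplitz setting of interest by Theorem \ref{th:toeplitz-summa}.
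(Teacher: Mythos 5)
Your argument is correct and follows the same route as the paper's proof: both identify the $qn$ eigenvalues of $\cP_L^{-1}\cA$ as $n$ copies of $1$ together with the $q-1$ branches $1+\lambda_j(f(\mu_\tau^{(k)}))$ for each eigenvalue $\mu_\tau^{(k)}$ of $Z_\tau$, and then invoke $\{Z_\tau\}_{\bfn}\sim_\lambda s$ via Definition~\ref{def:spectral-distr}. The paper states this in two lines and asserts the conclusion ``follows directly''; you additionally supply the technical justification for the composition step (continuity of the symmetric function $\Phi$ of the branches, the decomposition $\Phi=(q-1)F(1)+\Phi_0$ with $\Phi_0\in C_0$, and outlier control via Theorem~\ref{th:toeplitz-summa}), a detail the paper elides and that is indeed needed since Definition~\ref{def:spectral-distr} quantifies only over compactly supported test functions.
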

\begin{proof}
First we observe that the first branch given by the constant $1$ is produced by the $n$ eigenvalues of $\cP_L^{-1}\cA$ exactly equal to $1$. The other $q-1$ branches come from the fact that the other eigenvalues of $\cP_L^{-1}\cA$ are of the form
\[
1+ \lambda_i(f(\mu_\tau)), \ \ \ i=1,\ldots,q-1,
\]
for any eigenvalue $\mu_\tau$ of $Z_\tau=\tau M^{-1} K$. Since $\{\tau M^{-1} K\}_{\bfn}\sim_{\lambda} s$ the result follows directly from Definition \ref{def:spectral-distr}.
\end{proof}

\begin{theorem}\label{th:distr2}
If the discretization parameters $\tau$ and $h$ satisfy the order condition of optimal balancing, i.e., $\tau^{2q-1}\sim h^2$, then the spectral symbol $s$ of the matrix-sequence  $\{\tau M^{-1} K\}_{\bfn}$ is $s=+\infty$ so that $\bft_s = I_q$, that is the preconditioned matrix-sequence $\{\cP_L^{-1}\cA\}$ is spectrally clustered at $1$.
\end{theorem}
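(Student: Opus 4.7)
The plan is to combine three ingredients: the Toeplitz distribution theory of Section \ref{ssec:toeplitz-distribution}, the order condition $\tau^{2q-1}\sim h^2$, and the asymptotic behavior of the branches $\lambda_i(f(\mu_\tau))$ as $\mu_\tau\to +\infty$ already established in Sections \ref{q=2}--\ref{q>3}.

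First, I would identify the spectral symbol of $\{Z_\tau\}_{\bfn}$. Writing $Z_\tau=\frac{\tau}{h^2}T_{\bfn}^{-1}(g_1)T_{\bfn}(g_2)$ and applying Theorem \ref{th:toeplitz-summa} with $\bfff=g_2$, $\bfg=g_1$, one obtains $\{h^2 M^{-1}K\}_{\bfn}\sim_{\lambda} g_2/g_1$, and hence by scalar scaling $\{Z_\tau\}_{\bfn}\sim_{\lambda} (\tau/h^2)(g_2/g_1)$. Under the balancing hypothesis $\tau^{2q-1}\sim h^2$, the prefactor satisfies $\tau/h^2\sim \tau^{2-2q}\to +\infty$ as $\tau\to 0$ (since $q\geq 2$). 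Because $g_2(\theta_1,\theta_2)=4-2\cos\theta_1-2\cos\theta_2$ vanishes only at $\bftheta=\mathbf{0}$ and $g_1$ is strictly positive and bounded, the scaled symbol $(\tau/h^2)(g_2/g_1)$ diverges pointwise almost everywhere, which is the precise meaning of the claim $s=+\infty$.

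Second, I would feed this into Theorem \ref{th:distr1}: the preconditioned symbol is $\bft_s=\mathrm{diag}(1,1+\lambda_1(f(s)),\ldots,1+\lambda_{q-1}(f(s)))$. The asymptotic behaviour of every branch at infinity is already recorded: for $q=2$ directly from \eqref{f-q=2}, for $q=3$ from the discussion following \eqref{lambda second degree}, and for $q>3$ from the general procedure of Section \ref{q>3}. In each case $\lambda_i(f(\mu_\tau))\to 0$ as $\mu_\tau\to +\infty$, so in the limit $\bft_s$ collapses to $I_q$ and clustering at $1$ follows from Definition \ref{def:spectral-distr}.

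The main technical subtlety is to justify that ``$s=+\infty$'' actually entails clustering rather than yielding a formally vacuous symbol. I would make this rigorous via the standard $\varepsilon$-counting formulation of clustering. Given $\varepsilon>0$, continuity of the branches on $(0,\infty)$ together with the two limits $\lim_{\mu\to 0^+}\lambda_i(f(\mu))=0$ and $\lim_{\mu\to +\infty}\lambda_i(f(\mu))=0$ supply a compact set $[a,b]\subset (0,\infty)$ outside of which $|\lambda_i(f(\mu))|<\varepsilon$ for every $i=1,\ldots,q-1$. By the distributional result $\{Z_\tau\}_{\bfn}\sim_{\lambda} (\tau/h^2)(g_2/g_1)$, the asymptotic proportion of eigenvalues of $Z_\tau$ that lie in $[a,b]$ equals $(2\pi)^{-2}\mu_2\!\left(\{\bftheta:(\tau/h^2)(g_2/g_1)(\bftheta)\in[a,b]\}\right)$; since $\tau/h^2\to\infty$, this preimage shrinks to the zero set of $g_2$, which is a Lebesgue-null set. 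Hence the proportion of eigenvalues of $\cP_L^{-1}\cA$ outside an $\varepsilon$-neighbourhood of $1$ is $o(qn)$, giving the desired cluster at $1$. The only place where care is required is the interchange of the matrix-size limit (in the definition of $\sim_{\lambda}$) with the discretization limit $\tau,h\to 0$; this is handled by extracting, for each $\varepsilon$, a subsequence of $(\bfn,\tau,h)$ along which both limits are realized, which is standard in the GLT framework and for which all required bounds are uniform.
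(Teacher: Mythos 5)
Your proposal is correct and follows the same route as the paper: identify the symbol of $\{Z_\tau\}_{\bfn}$, observe that the balancing condition drives it to $+\infty$ almost everywhere, pass through Theorem \ref{th:distr1}, and invoke the boundary limits $\lambda_i(f(\mu_\tau))\to 0$ as $\mu_\tau\to 0^+$ and $\mu_\tau\to+\infty$ established for $q=2$, $q=3$, and $q>3$. The paper's proof is just the last step and is stated in one line; your write-up additionally makes explicit the $\varepsilon$-counting argument, the compactness argument for extracting $[a,b]\subset(0,\infty)$ outside of which all branches are $\varepsilon$-small, and the interchange of the matrix-size limit with the discretization limit $\tau,h\to 0$ --- subtleties that the paper leaves implicit. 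These additions are genuine and correct (and are exactly the places where a careful referee would push), but they do not constitute a different approach; they are an elaboration of the one the authors intended.
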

\begin{proof}
The statement is a plain consequence of Theorem \ref{th:distr1} taking into account that
\[
\lim_{\mu_\tau=0^+,+\infty} 1+ \lambda_i(f(\mu_\tau))=1, \ \ \ i=1,\ldots,q-1,
\]
thanks to Theorem \ref{eigentructure-q=2} for $q=2$, Theorem \ref{eigentructure-q=3} for $q=3$, and Section \ref{q>3} for larger values of $q$.
\end{proof}

\begin{remark}\label{other cases}
If $\tau=o(h^2)$ then $s=0$ identically and again $\bft_s = I_q$ that is the preconditioned matrix-sequence $\{\cP_L^{-1}\cA\}$ is spectrally clustered at $1$. Conversely, in the case where
\[
\frac{\tau}{h^2}=C,
\]
we observe the only case in which the spectral symbol $s$ is nontrivial i.e.
\[
s(\theta_1,\theta_2) = C \frac{g_2(\theta_1,\theta_2)}{g_1(\theta_1,\theta_2)}\in \left[0, C \max\frac{g_2}{g_1} \right].
\]
In such a setting, the more $C$ is large, the more we can appreciate the emergence of the $q$ spectral branches described in Theorem \ref{th:distr1}.
\end{remark}

\section{Numerical experiments}\label{sec:numerics}
In order to support the results regarding the distribution
 of the eigenvalues of the preconditioned system $\cP^{-1}\cA$, as derived is Theorem \ref{th:distr1}, Theorem \ref{th:distr2}, and Remark \ref{other cases}, we show numerically that for large enough $n$ the eigenvalues of  $\cP^{-1}\cA$ behave like the indicated distribution functions, as theoretically predicted. In the numerical tests $M$ and $K$ in \eqref{trans_kronfrom} and \eqref{eq_precPL} and are both generated with the deal.II FEM library using a Cartesian discretization using $Q_1$ bi-linear finite elements with $M$ being the mass matrix and $K$ being the discretized Laplace operator $-\Delta$ on the unit square $\Omega=[0,1]\times[0,1]$. The computations of the eigenvalue are done in Julia. Below, $h$ denotes the space discretization parameter, $n$ denotes the spacial dimension i.e., dimension of $K$ and $M$ and $n$ and $h$ are related as $h=1/(\sqrt{n}-1)$; dim($\cP_L^{-1}\cA$)$=qn$.
\subsection{Test 1}
With reference to Theorem \ref{th:distr2}, we define the following quantities
\[
N(\epsilon, h)= \# \{j:  |\lambda_j(\cP_L^{-1}\cA )-1| < \epsilon\},
\]
\[
r(\epsilon, h)= \frac{ N(\epsilon, h)}{\text{dim}(\cP_L^{-1}\cA)}.
\]
Here $N(\epsilon, h)$ counts the number of eigenvalues which lie within a circle of radius $\epsilon$ centered on real one while $r(\epsilon,h)$ gives the ratio of eigenvalues in the $\epsilon$-circle to the total number of eigenvalues. We evaluate $N(\epsilon, h)$ and $r(\epsilon, h)$ for $\epsilon \in \{ 0.2, 0.1, 0.05 \}$, $h=2^{-k}, \ k=2, \hdots, 6$, and as in Theorem \ref{th:distr2} we choose $\tau$ as $\tau^{2q-1}=h^2$. The results are shown in Table \ref{tab:numtest1}.
\begin{table}[H]
\caption{$N(\epsilon, h)$ and $r(\epsilon, h)$ for a range of $\epsilon$ and $h$ with $\tau^{2q-1}=h^2$ and q=3}
\label{tab:numtest1}
\centering
\begin{tabular}{||cc||cc|cc|cc||}
\hline
dim($\cP_L^{-1}\cA$) & $h=2^{-k}$ & $N(0.2)$ & $r(0.2)$  & $N(0.1)$ & $r(0.1)$& $N(0.05)$ & $r(0.05)$     \\
\hline
75 & 2 & 75 & 1.0 & 69 & 0.9200 & 59 & 0.7867 \\
243 & 3 & 243 & 1.0 & 240 & 0.9877 & 229 & 0.9424 \\
867 & 4 & 866 & 0.9988 & 861 & 0.9931 & 853 & 0.9839 \\
3267 & 5 & 3267 & 1.0 & 3259 & 0.9976 & 3246 & 0.9936 \\
12675 & 6 & 12675 & 1.0 & 12663 & 0.9991 & 12644 & 0.9976 \\
\hline
\end{tabular}
\end{table}
As predicted by Theorem \ref{th:distr2}, from Table \ref{tab:numtest1}, we clearly observe that as we decrease $h$ the percentage of eigenvalues that lie in the circles with radius $\epsilon$ centered at one increases monotonically and tends to $100\%$. This trend is seen across all examples except for $\epsilon=0.2$ and $h=2^{-4}$, where only a single eigenvalue falls outside of the circle.

\subsection{Test 2}
To illustrate the distributions predicted in Theorems \ref{th:distr1} and \ref{th:distr2} and in Remark \ref{other cases}, we define
\begin{align*}
E_1 &=SORT(|\lambda_j(\cP_L^{-1}\cA)|),
j=1,\ldots qn^2 \\
E_2 &=SORT([|1+\lambda_l(s(\theta_i,\theta_k))|, (1,...,1)]), \ i,j \in \{1,\hdots,n\}.
\end{align*}
Here $E_1$ denotes the sorted absolute values of the eigenvalues of the preconditioned system, $E_2$ denotes the sorted values of combination of two vectors, the first being a vector of all ones of length $n$, stemming from at least $n$ eigenvalues being one, see e.g., Theorem \ref{eigentructure-q=2} and \ref{eigentructure-q=3}. The other vector used in the construction has entries $|1+\lambda_l(s(\theta_i,\theta_k))|$ where $\lambda_l(s(\theta_i,\theta_k))$ are the zeros of the $q-1$ degree polynomials generated by $s$, given by \eqref{f-q=2} for the case $q=2$ and by the zeros of \eqref{lambda second degree} for $q=3$. Here $s(\theta_i,\theta_k)$ denotes the symbol of the matrix $\tau M^{-1} K$ sampled in $\theta_i,\theta_k$, it is given by
\[
s(\theta_i,\theta_k) = \frac{g_1(\theta_i,\theta_k)}{g_2(\theta_i,\theta_k)} , \  \theta_m=m\pi/(n+1) , \ m=1,\ldots,n,
\]
where $
g_1(\theta_1,\theta_2)= \frac{\tau}{3} (8 - 2\cos(\theta_1) - 2\cos(\theta_2)(1+2\cos(\theta_1)) )
$
 is the symbol for the stiffness matrix, including the $\tau$-scaling,  $g_2(\theta_1,\theta_2)=  \frac{4 h^2}{36}  ( 2 + \cos(\theta_1) )(2 + \cos(\theta_2) )$
is the symbol for the mass matrix $M$. We compute and plot $E_1$ and $E_2$ for $q=2,3$ for a range of discretizations and different choices of $\tau$. We expect that as we refine, the symbol of $\tau M^{-1} K$  better approximates the true eigenvalues of $\tau M^{-1} K$, and as a consequence, $E_1$ and $E_2$ should tend to superpose when $n\rightarrow \infty$. For each $q$ we choose $\tau$ in three different ways. More precisely, following the indications in Theorem \ref{th:distr2}, we choose $\tau$ such that $\tau^{2q-1}=h^2$. Then, in order to illustrate the statements in Remark \ref{other cases}, we choose $\tau=C h^2$ for $C=1,10$. The results for $q=2$ are shown in Figures \ref{q2matched},  \ref{q2c1}, \ref{q2c10} and for $q=3$ in Figures \ref{q3matched},  \ref{q3c1}, \ref{q3c10}.

\begin{figure}[htbp]
\centering
\includegraphics[width=0.32\textwidth]{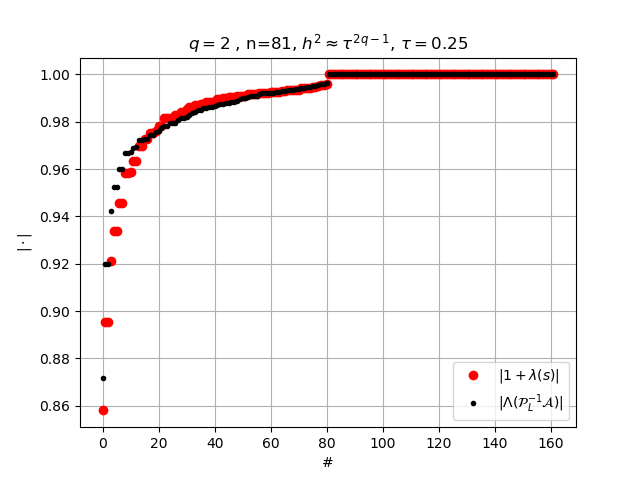}
\includegraphics[width=0.32\textwidth]{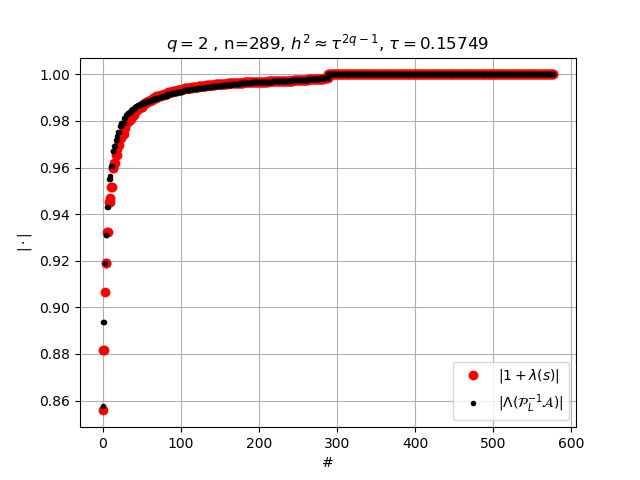}
\includegraphics[width=0.32\textwidth]{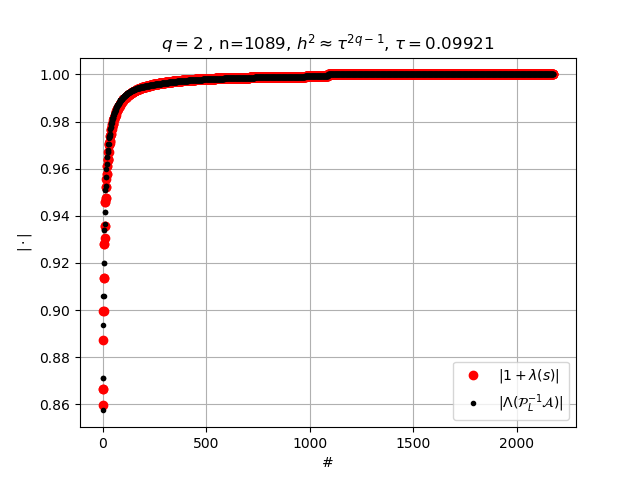}
\caption{$q=2$:  Symbol-prediction and eigenvalues of $\cP_L^{-1}\cA$ - range of $h$ with $\tau^{3}=h^2$.}
\label{q2matched}
\end{figure}

\begin{figure}[htbp]
\centering
\includegraphics[width=0.32\textwidth]{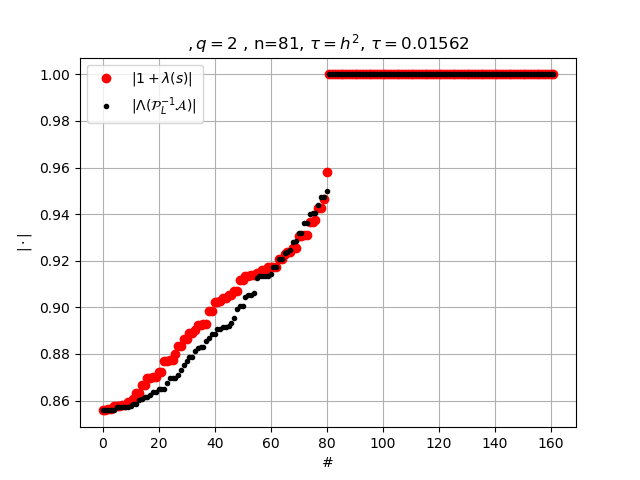}
\includegraphics[width=0.32\textwidth]{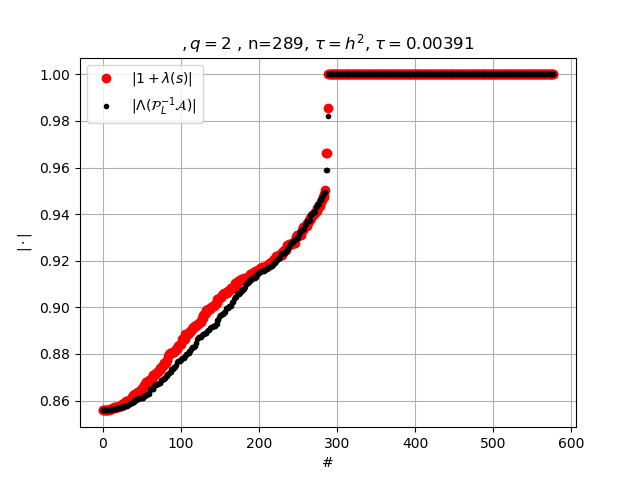}
\includegraphics[width=0.32\textwidth]{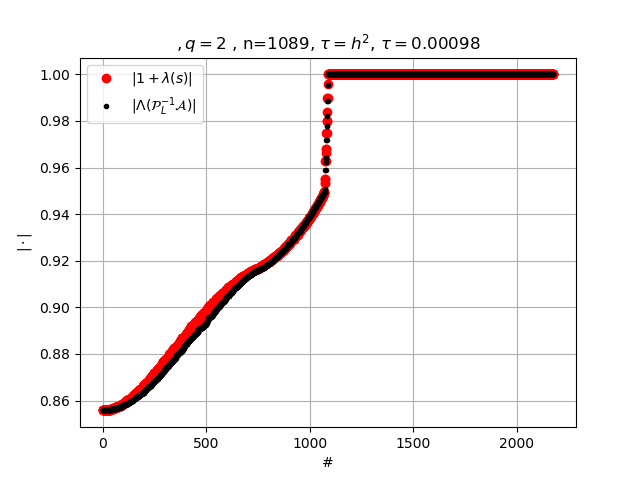}
\caption{$q=2$:  Symbol-prediction and eigenvalues of
$\cP_L^{-1}\cA$ - range of $h$ with $\tau=h^2$.}
\label{q2c1}
\end{figure}

\begin{figure}[htbp]
\centering
\includegraphics[width=0.32\textwidth]{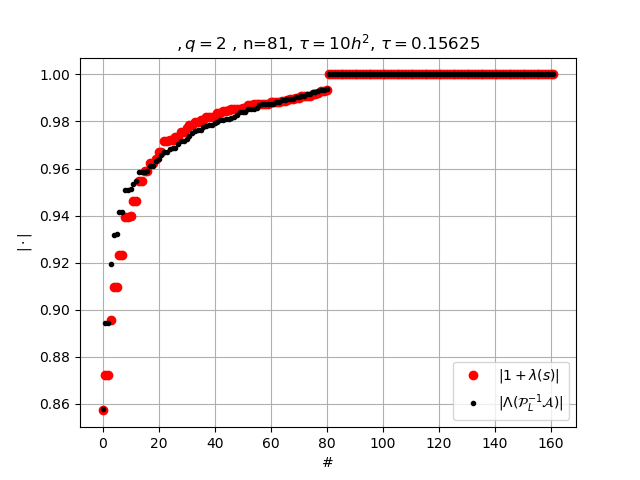}
\includegraphics[width=0.32\textwidth]{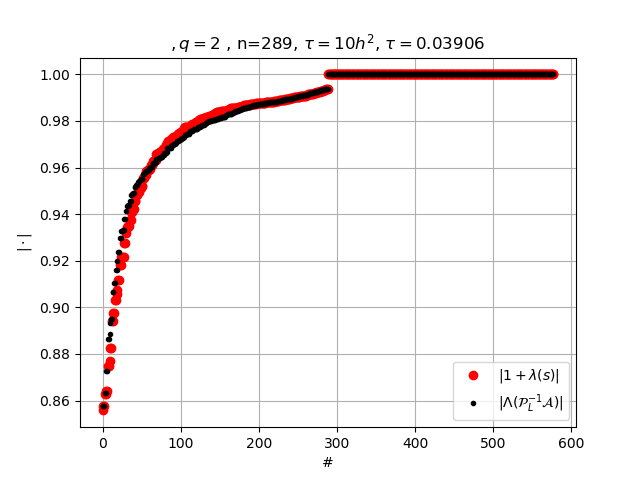}
\includegraphics[width=0.32\textwidth]{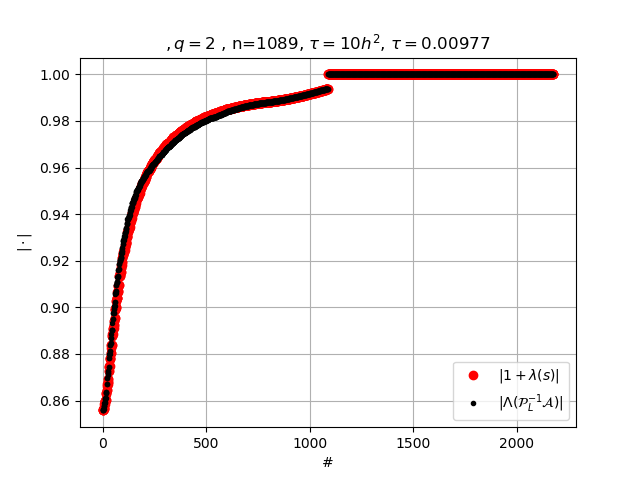}
\caption{$q=2$:  Symbol-prediction and eigenvalues of
$\cP_L^{-1}\cA$ - range of $h$ with $\tau=10h^2$.}
\label{q2c10}
\end{figure}

\begin{figure}[htbp]
\centering
\includegraphics[width=0.32\textwidth]{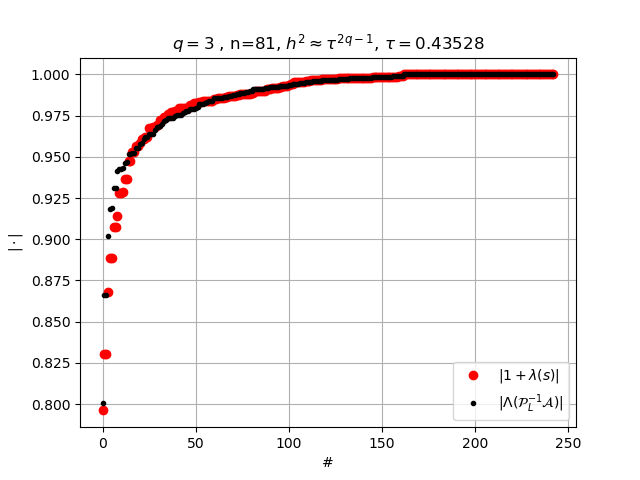}
\includegraphics[width=0.32\textwidth]{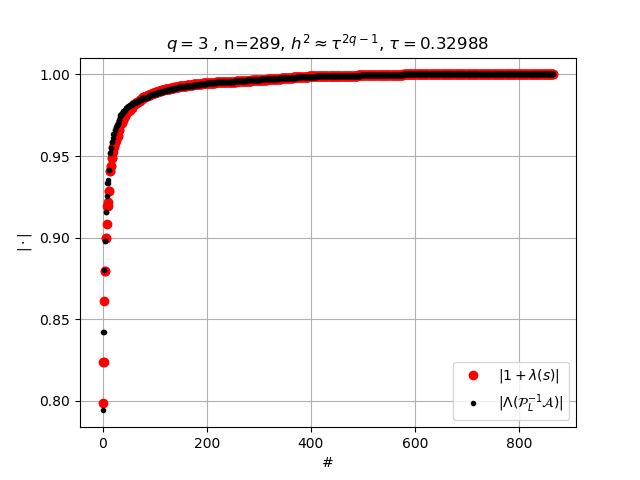}
\includegraphics[width=0.32\textwidth]{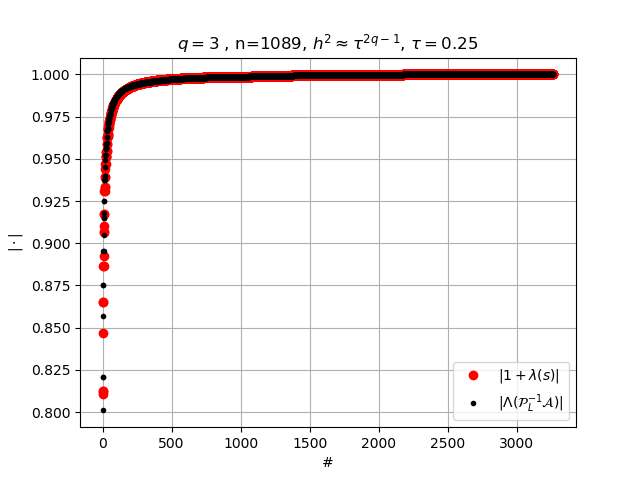}
\caption{$q=3$: Symbol-prediction and eigenvalues
$\cP_L^{-1}\cA$ - for a range of $h$ with $\tau^{5}=h^2$.}
\label{q3matched}
\end{figure}

\begin{figure}[htbp]
\centering
\includegraphics[width=0.32\textwidth]{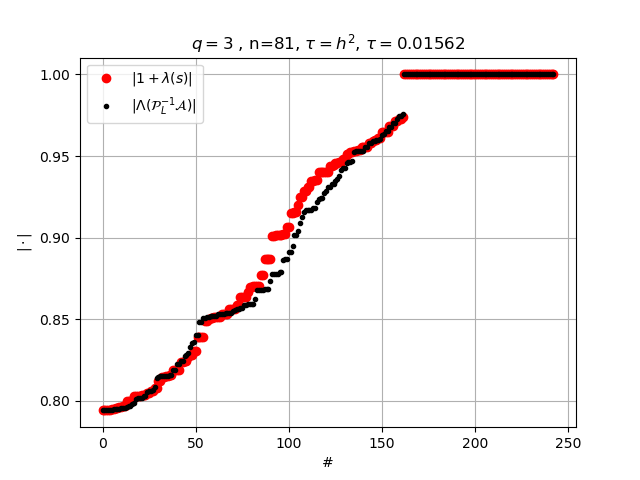}
\includegraphics[width=0.32\textwidth]{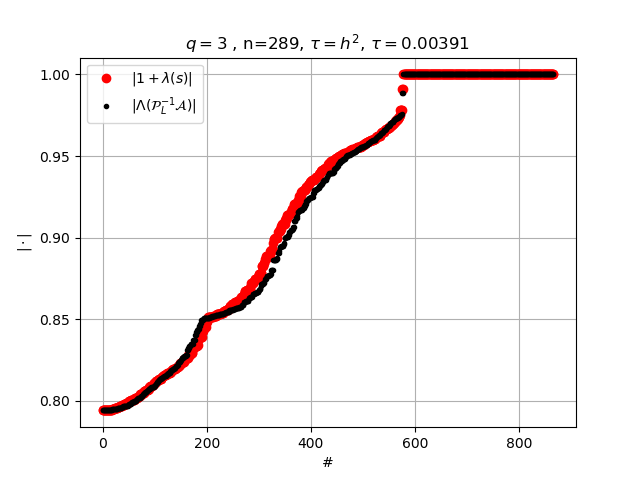}
\includegraphics[width=0.32\textwidth]{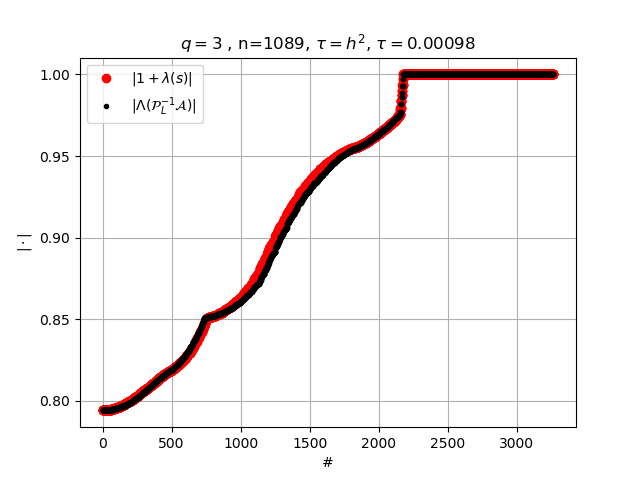}
\caption{$q=3$: Symbol-prediction and eigenvalues
$\cP_L^{-1}\cA$ - for a range of $h$ with $\tau=h^2$.}
\label{q3c1}
\end{figure}

\begin{figure}[htbp]
\centering
\includegraphics[width=0.32\textwidth]{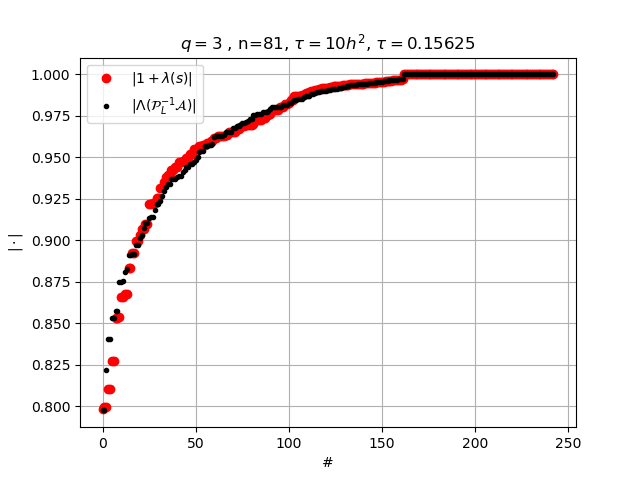}
\includegraphics[width=0.32\textwidth]{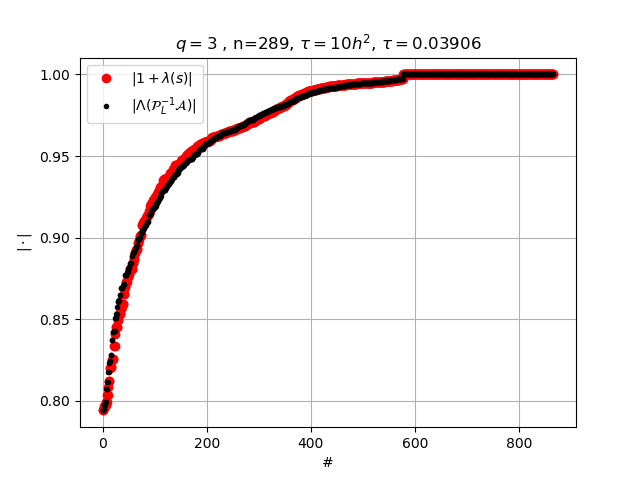}
\includegraphics[width=0.32\textwidth]{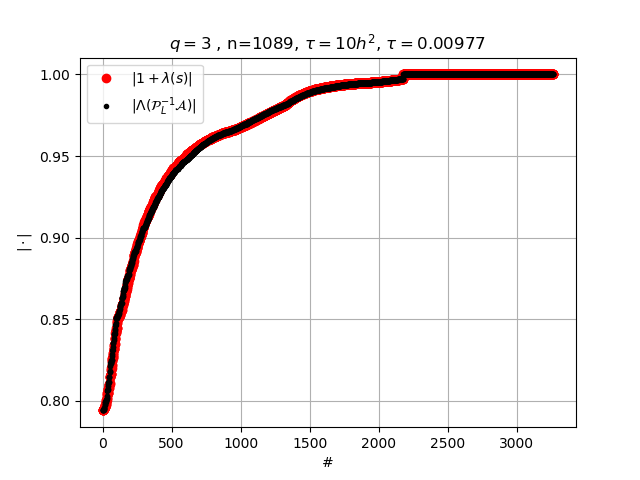}
\caption{$q=3$: Symbol-prediction and eigenvalues
$\cP_L^{-1}\cA$ - for a range of $h$ with $\tau=10h^2$.}
\label{q3c10}
\end{figure}

In all cases we observe in a convincing way a complete adherence of the numerical results with the theoretical findings. Indeed, the eigenvalues follow the prediction indicated by the symbol, and the match improves with the refinement. Finally, the clustering property shown in Table \ref{tab:numtest1} in the current section is illustrated in Figure \ref{q3matched}.

The numerical experiments in this work treat the distribution of the eigenvalues of the preconditioned matrices, preconditioner performance when solving the linear systems is documented in \cite{IRKtheory} and \cite{stageparallel} for range of $q\in\{1,\hdots,9\}$.

\section{Conclusions}\label{sec:final}
In this work we consider strongly A-stable implicit Runge-Kutta methods of arbitrary order of accuracy, for which an efficient preconditioner has been introduced. We present a refined spectral analysis of the corresponding matrices and matrix-sequences, both in terms of localization,  asymptotic global distribution, and explicit expressions of the eigenvectors, by using matrix theoretical and spectral tools reported in Section \ref{sec:sp_tools}. The presented theoretical analysis fully agrees with the numerically observed spectral behavior and substantially improves the theoretical study done in this direction so far.
A wide set of numerical experiments is included and critically discussed.
As future steps there are many more intricate cases that can be treated let us say directly by our very parametric theoretical setting.
\begin{itemize}
\item The case where the IRK method is considered with higher order in space approximation leads to the case of multilevel Toeplitz matrices, say $M,K$, and preconditioned Toeplitz matrices $M^{-1}K$ having a matrix-valued symbol that is with $r>1$ according to the notation in Section \ref{ssec:toeplitz-distribution}. In that setting the equivalent of the matrix $Z_\tau$ has a spectral behavior for fixed dimension and asymptotically which is known in detail; the same is true when we consider a nonsymmetric stiffness matrix i.e. when the term $\bm{b}$ in \rref{eq_pde} is nonzero (see \cite{GSS,GLT-blockdD}).
\item Since the approach is very general as emphasized in the parametric derivation in Section \ref{q=3}, other discretization methods in time that give raise to a different Butcher tableau could be analysed in a similar manner.
\item The current type of analysis is reminiscent of the bifurcation theory, which has been used in several settings in pure and applied mathematics, as well in science and engineering contexts (see e.g. \cite{bif1,bif2,bif3,bif4,bif5} and references therein). A direction to investigate is the use of such tools for deriving properties of the branches and expressions or bounds for $r_q^*$ in a rigorous way, especially when the Runge-Kutta parameter $q$ is either moderate or large.
\item Finally, we believe that the case of more general domain and variable coefficients could be a challenge also for the GLT theory \cite{GLT1_book,GLT2_book,Barb,GLT-block1D, GLT-blockdD}, given the intrinsic nonsymmetric nature of the original coefficient matrix and of the resulting preconditioning, even if the analysis of $M,K$, $M^{-1}K$ is easily available also for $r>1$.
\end{itemize}

\end{document}